\newtheorem{thm}{Theorem}[section]
\newtheorem*{thm*}{Theorem}
\newtheorem{lem}[thm]{Lemma}
\newtheorem{cor}[thm]{Corollary}
\newtheorem{prp}[thm]{Proposition}
\theoremstyle{definition}
\newtheorem{dfn}[thm]{Definition}
\theoremstyle{remark}
\newtheorem{rmk}[thm]{Remark}
\newtheorem{ex}[thm]{Example}
\newtheorem{qstn}[thm]{Open Problem}
\newcommand{\wt}{\mathrm{wt}}
\newlength\runit
\edef\Radius#1{#1\runit}
\newcommand*\circled[1]{\tikz[baseline=(char.base)]{\node[shape=circle,draw,inner sep=2pt] (char) {#1};}}
\newcommand{\blackdot}{
    \begin{tikzpicture}[baseline,x=.75cm, y=.5cm]
    \draw[circle, fill=black](0, 0) circle[radius = 1mm] node {};
    \end{tikzpicture}
}
\newcommand{\whitedot}{
    \begin{tikzpicture}[baseline,x=.75cm, y=.5cm]
    \draw[circle, fill=white](0, 0) circle[radius = 1mm] node {};
    \end{tikzpicture}
}
\newcommand{\topgraydot}{
    \begin{tikzpicture}[baseline,x=.75cm, y=.5cm]
    \draw[circle, fill=gray](0, 0) circle[radius = 1mm] node (D1) {};
    \draw[dotted,thick] (-.25,-.4) -- (-.25,.4) {};
    \draw[dotted,thick] (-.25,.4) -- (.25,.4) {};
    \draw[dotted,thick] (.25,.4) -- (.25, -.4) {};
    \end{tikzpicture}
}
\newcommand{\bottomgraydot}{
    \begin{tikzpicture}[baseline,x=.75cm, y=.5cm]
    \draw[circle, fill=gray](0, 0) circle[radius = 1mm] node (D1) {};
    \draw[dotted,thick] (-.25,-.4) -- (-.25,.4) {};
    \draw[dotted,thick] (.25,.4) -- (.25, -.4) {};
    \draw[dotted,thick] (.25, -.4) -- (-.25,-.4) {};
    \end{tikzpicture}
}
\author{Robert Angarone}
\email{angar017@umn.edu}
\title{Cylindrical Networks and Total Nonnegativity}
\begin{document}

\begin{abstract}
We prove that an infinite block-Toeplitz matrix with finite diagonal support is totally nonnegative if and only if it is the weight matrix of a cylindrical network. This generalizes a well-known theorem of Brenti concerning finite totally nonnegative matrices and planar networks; in particular, our work gives an alternative, self-contained proof of the non-square case. Our argument employs Temperley-Lieb immanants, first introduced by Rhoades and Skandera, which are certain elements of Lusztig's dual canonical bases. As an application, we also obtain a new proof of a well-known theorem relating totally nonnegative block-Toeplitz matrices to interlacing polynomials.
\end{abstract}

\maketitle

\section{Introduction}

A real matrix is called \textit{totally nonnegative} (TN) if all of its minors, i.e. the determinants of its square submatrices, are nonnegative. A \textit{Toeplitz} matrix is one which is constant along diagonals, i.e. entry $(i,j)$ is always equal to entry $(i+1,j+1)$. Accordingly, an infinite block-Toeplitz matrx is one of this form:
\[\begin{bmatrix}
A_0 & A_1 & A_2 & A_3 & \ldots & A_i & A_{i+1} & A_{i+2} \ldots \\
0 & A_0 & A_1 & A_2 & \ldots & A_{i-1} & A_i & A_{i+1} \ldots \\
0 & 0 & A_0 & A_1 & \ldots & A_{i-2} & A_{i-1} & A_i & \ldots \\
\vdots & \vdots & \vdots & \vdots & \vdots & \vdots & \vdots & \ddots
\end{bmatrix} = T\left(\underline{A}\right),\]
where $\underline{A}=(A_0, A_1, \ldots)$ is a sequence of real matrices which all have the same dimensions. If they are all $n \times m$ matrices, we may also call such a matrix \textit{$(n,m)$-periodic}, since in this case the block-Toeplitz condition is equivalent to requiring that entry $(i,j)$ must always be equal to entry $(i+n, j+m)$. We say that such a matrix has \textit{finite diagonal support} if the sequence $\underline{A}$ has only finitely many nonzero terms.

A \textit{cylindrical network} is a planar, edge-weighted, acyclic digraph embedded on a cylinder, with $n$ labelled sources on one boundary component and $m$ labelled sinks on the other.
We associate to each cylindrical network $N$ an infinite block-Toeplitz matrix called its \textit{weight matrix} $W(N)$. For $i\leq n, \, j \leq m,$ and $w, w' \in \mathbb{N}$, the $(i + wn, \, j + w'm)$ entry of $W(N)$ is given by $\sum_p \wt(p)$, where the sum ranges over all directed paths $p$ in $N$ from source $i$ to sink $j$ that wrap around the cylinder $w'-w$ times, and $\wt(p)$ is the product of all the edges in $p$. The primary contribution of this paper is the following result:
\begin{thm}\label{thm:main}
Any infinite, $(n,m)$-periodic TN block-Toeplitz matrix with finite diagonal support is equal to the weight matrix of some cylindrical network with $n$ sources, $m$ sinks, and nonnegative edge weights.
\end{thm}
When considered in tandem with results of Lam and Pylyavskyy, our main theorem completes a cylindrical, infinite generalization of a well-known story concerning \textit{planar} networks and \textit{finite} TN matrices. Brenti proved that any finite TN matrix is equal to the weight matrix of a finite planar network with nonnegative edge weights \cite{brenti-tp}. Conversely, the famed Gessel-Lindstr\"{o}m-Viennot lemma implies that the weight matrix of any finite planar network with nonnegative edge weights is TN \cite{lindstrom,gessel-viennot}.

Our Theorem~\ref{thm:main} is the cylindrical generalization of Brenti's theorem, while Lam and Pylyavskyy introduced the cylindrical generalziation of the Gessel-Lindstr\"{o}m-Viennot lemma \cite{lam-pylyavskyy}. Lam and Pylyavskyy phrase their results in terms of the \textit{totally nonnegative loop group}, a construction equivalent to invertible $(n,n)$-periodic matrices.

In Sections~\ref{sec:background_tn}~through~\ref{sec:background_loop}, we place the main theorem in context and recall some important known results. In Sections~\ref{sec:background_tl}~and~\ref{sec:cmd}, we recall key lemmas concerning Temperley-Lieb immanants, which are used to great extent in the proof of the main theorem. Section~\ref{sec:main} is dedicated to proving the result; in particular, Section~\ref{sec:redux} explains how the main result reduces to Proposition~\ref{lem:main}, and Section~\ref{sec:proof-of-main-lemma} is dedicated to proving that proposition. In Section~\ref{sec:interlacing}, we discuss applications of our technique to interlacing polynomials and pose an open question in this vein.

\section{Background}

\subsection{Totally Nonnegative Matrices}\label{sec:background_tn}

Although TN matrices were first studied in analysis, they have since become of great interest to combinatorialists. This is due to their appearance in the theory of real-rooted and log-concave polynomials \cite{brenti-polya}, their prominent role in laying the groundwork for the theory of cluster algebras \cite{FZ-tests-params, FZ-clusters-1}, and more. We refer the reader to \cite{brenti-tp,FZ-tests-params} for more comprehensive overviews of total positivity.

If $I$ is a subset of the rows of $M$ and $J$ is a subset of its columns, we let $M[I,J]$ denote the submatrix of $M$ with row set $I$ and column set $J$. We also allow for $I$ and $J$ to be multisets of the rows and columns of $M$; in this way, we form \textit{generalized} submatrices of $M$. Note that if $M$ is a TN matrix, then any generalized submatrix of $M$ is also TN.

The following theorem, known as the Loewner-Whitney theorem, gives simple generators for the set of invertible TN matrices.

\begin{thm}[\cite{Loewner, whitney}]\label{thm:semigroup-generators}
As a semigroup, the set of invertible TN matrices is generated by matrices of the following forms, for nonnegative $a$, $b$, and $c$:
\[\begin{bmatrix}
1 \\
& 1 \\
& & \ddots \\
& & & a \\
& & & & \ddots \\
& & & & & 1
\end{bmatrix},
\begin{bmatrix}
1 \\
& 1 \\
& & \ddots \\
& & & 1 & b \\
& & & & \ddots \\
& & & & & 1
\end{bmatrix},
\begin{bmatrix}
1 \\
& 1 \\
& & \ddots \\
& & & 1 \\
& & & c & \ddots \\
& & & & & 1
\end{bmatrix}.\]
\end{thm}

\subsection{The Gessel-Lindstr\"{o}m-Viennot Lemma}\label{sec:background_glv}

The following fundamental result, due to Lindstr\"{o}m and later Gessel-Viennot, establishes a combinatorial method for generating TN matrices. It is a cornerstone of algebraic combinatorics, and is used, for example, to provide a modern proof of the Jacobi-Trudi identity.

\begin{dfn}\label{dfn:weight-matrix}
Suppose $N$ is a finite, planar, acyclic, edge-weighted digraph with $n$ sources and $m$ sinks (i.e. a \textit{planar network}). Then the weight matrix $W(N)$ is given by \[W(N)_{i,j} = \sum_{\substack{\text{paths } p \\ \text{ source }i \to \text{ sink } j}} \mathrm{wt}(p), \qquad \mathrm{wt}(p) := \prod_{e \in p} \mathrm{wt}(e).\]
\end{dfn}

\begin{lem}[\cite{lindstrom,gessel-viennot}]\label{lem:classical-GLV}
Suppose $N$ is a planar network as in Definition~\ref{dfn:weight-matrix}. Given subsets $I = \{i_1<i_2<\ldots<i_k\} \subseteq [n]$ and  $J = \{j_1<j_2<\ldots<j_k\} \subseteq [m]$, we have \[ \left| W(N)[I,J] \right| = \sum_{\substack{\text{noncrossing path families } \\ p_{1}, p_{2}, \ldots, p_{k} \\ p_\ell : \text{ source } i_\ell \to \text{ sink } j_\ell}} \prod_{\ell=1}^k \mathrm{wt}(p_{k}).\] In particular, if $N$ has nonnegative edge weights, then $W(N)$ is totally nonnegative.
\end{lem}

What is especially surprising is that the converse of Lemma~\ref{lem:classical-GLV} is true; that is, given any totally nonnegative matrix $M$, there exists a planar network $N$ with nonnegative edge weights such that $W(N)=M$. If $M$ is invertible, this can be proven by showing that each of the matrices in Theorem~\ref{thm:semigroup-generators} can be represented as a network. One then observes that concatenating networks, sink-to-source, corresponds to matrix multiplicaiton. If $M$ is not invertible, or perhaps not even square, it can still be represented by a network, but one must use a more complicated argument. This was first observed by Brenti \cite{brenti-tp}.

\begin{thm}[\cite{brenti-tp}]\label{thm:TN-representable}
Any TN matrix is the weight matrix of a planar network with nonnegative edge weights.
\end{thm}

Brenti's argument relies on a certain tridiagonal factorization theorem, ultimately coming from theorems in \cite{ando} and \cite{cryer}. Our proof of the cylindrical version of this theorem will also involve a factorization result; Brenti's methods, however, do not immediately apply to our case. While we could apply existing results involving finite TN matrices locally on any square submatrix of a given infinite TN matrix, the difficulty is in showing that we can find a network (equivalently, factorization) structure which is compatible globally with the entire matrix.

\subsection{Total Nonnegativity in the Loop Group}\label{sec:background_loop}

Lam and Pylyavskyy \cite{lam-pylyavskyy} introduced an extension of the notion of total nonnegativity to matrices whose entries are polynomials (or power series) with real coefficients, rather than real numbers. These turn out to be a special case of $(n,m)$-periodic matrices called the \textit{loop group}. In this section, we recall their core results concerning the relationship between the totally nonnegative loop group and cylindrical networks.

\begin{dfn}
The \textit{$n \times n$ polynomial loop group} is the set of $n \times n$ matrices with Laurent polynomial entries whose determinants are nonzero monomials. An element of the loop group is called a \textit{loop}.
\end{dfn}

\begin{dfn}\label{dfn:unfolding}
Suppose $M$ is an $n \times m$ matrix with formal Laurent series entries. Let $M_d$ be the matrix whose $i,j$th entry is the coefficient of $t^d$ in $M_{i,j}$. Then define the \textit{unfolding} of $M$ to be the infinite block-Toeplitz matrix with copies of $M_i$ along the $i$th block-diagonal. We say $M$ is TN if its unfolding is TN.
\end{dfn}

\begin{dfn}\label{dfn:folding}
Define the \textit{folding} of an infinite $(n,m)$-periodic matrix $M$ to be the unique $n \times m$ matrix with Laurent series entries whose unfolding is $M$.
\end{dfn}

\begin{rmk}\label{rmk:folding-is-homo}
Unfolding and folding preserve multiplication and addition of matrices.
\end{rmk}

\begin{ex}\label{eg:unfolding}
Here is a member of the $2 \times 2$ loop group and its associated unfolding:
\begin{align*}
{\renewcommand*{\arraystretch}{2.3}
\left[\begin{array}{cc}
\dfrac{1}{1-6t} & \dfrac{2}{1-6t}  \\
\dfrac{3t}{1-6t} & \dfrac{1}{1-6t}  \\
\end{array} \right]}
&=
\begin{pmatrix}
1 & 2 \\
0 & 1
\end{pmatrix}
+
\begin{pmatrix}
6 & 12 \\
3 & 6
\end{pmatrix} t
+
\begin{pmatrix}
36 & 72 \\
18 & 36
\end{pmatrix} t^2
+ \ldots \\
& \underset{\text{Unfold}}{\leadsto}
\left[\begin{array}{cc|cc|cc|c}
1 & 2 & 6 & 12 & 36 & 72 & \ldots \\
0 & 1 & 3 & 6 & 18 & 36 & \ldots \\
\hline
0 & 0 & 1 & 2 & 6 & 12 & \ldots \\
0 & 0 & 0 & 1 & 3 & 6 & \ldots \\
\hline
\vdots & \vdots & \vdots & \vdots & \vdots & \vdots & \ddots
\end{array} \right]
\end{align*}
\end{ex}

\begin{dfn}
A \textit{cylindrical network} is finite, planar, edge-weighted, acyclic digraph embedded on a cylinder, with $n$ sources $v_1, v_2, \ldots, v_n$ on one boundary component and $m$ sinks $w_1, w_2, \ldots, w_m$ on the other, along with a distinguished line $\mathfrak{h}$ connecting the boundary components of the cylinder. By default, $\mathfrak{h}$ originates between sources $v_1$ and $v_n$, terminates between sinks $w_1$ and $w_m$, and does not intersect any vertex of $N$.
\end{dfn}

\begin{dfn}
Given a path $p$ in a cylindrical network $N$ with line $\mathfrak{h}$, define $\mathrm{rot}(p)$ to be the \textit{rotor} of $p$: the number of times $p$ crosses $\mathfrak{h}$ counterclockwise minus the number of times $p$ crosses $\mathfrak{h}$ clockwise.
\end{dfn}

\begin{dfn}
Given $i,j \in \mathbb{N}$, let $\overline{i} \in \{1,2,\ldots,n\}$ be equal to $i \bmod n$; similarly for $\overline{j}$ and $m$. Then a path $p$ in a cylindrical network $N$ is said to be an \textit{$(i,j)$-path} if it begins at source $v_{\overline{i}}$, ends at sink $w_{\overline{j}}$, and has $\mathrm{rot}(p) = \left(\left(j-\overline{j}\right)/m\right) - \left(\left(i - \overline{i}\right)/n\right)$.
\end{dfn}

\begin{rmk}
We will always draw cylindrical networks with sources on the left, sinks on the right, and the line $\mathfrak{h}$ on top. A crossing of $\mathfrak{h}$ is `counterclockwise' if it would be counterclockwise when viewed from the right of the cylinder; in other words, it is counterclockwise if it moves from the back of our view in the diagrams to the front; see Example~\ref{eg:cylindrical-network}

Intuitively, when forming an $(i,j)$-path, one should think that crossing the line $\mathfrak{h}$ counterclockwise has the effect of increasing all sink labels by $m$, as though we crossed into a new copy of the cylinder. Accordingly, crossing $\mathfrak{h}$ clockwise has the effect of decreasing path labels by $m$. 
\end{rmk}

\begin{dfn}
Given a cylindrical network $N$, the \textit{folded weight matrix} of $N$ is the $n \times m$ matrix given by
    \[\overline{W}(N)_{i,j} = \sum_{\substack{\text{paths } p \\ v_i \to w_j}} t^{\mathrm{rot}(p)}\mathrm{wt}(p).\]
The \textit{unfolded weight matrix} is the infinite matrix given by 
    \[W(N)_{i,j} = \sum_{(i,j)-\text{paths } p} \mathrm{wt}(p).\]
Note that the unfolded weight matrix is $(n,m)$-periodic, and indeed is the unfolding of the folded weight matrix.
\end{dfn}

\begin{dfn}
Suppose $p$ is an $(i,j)$-path and $q$ is an $(i',j')$-path in a cylindrical network $N$ such that the two paths intersect at vertex $c$. Then let $\tilde{p}$ and $\tilde{q}$ be the paths obtained by following either $p$ or $q$ until the point $c$, then following the other path afterwards. We say that $p$ and $q$ are \textit{properly crossing} if $\tilde{p}$ is an $(i,j')$-path (equivalently, if $\tilde{q}$ is an $(i,j')$-path); the paths are called \textit{uncrossed} otherwise.
\end{dfn}

\begin{ex}\label{eg:cylindrical-network}
Below is a cylindrical network with three source vertices, three sink vertices, and one additional vertex. All edges are assumed to have weight $1$. The line $\mathfrak{h}$ coincides with the top of the picture. Dotted portions of edges are assumed to lie on the back side of the cylinder. The red path $p$ from $v_1$ to $w_3$ is a $(1,6)$-path. The green path $r$ from $v_2$ to $w_2$ is a $(2,2)$-path. The blue path $q$ from $v_3$ to $w_1$ is a $(3,4)$-path. The paths $p$ and $q$ are uncrossed, even though they intersect.
\begin{center}
\includegraphics[width=.5\linewidth]{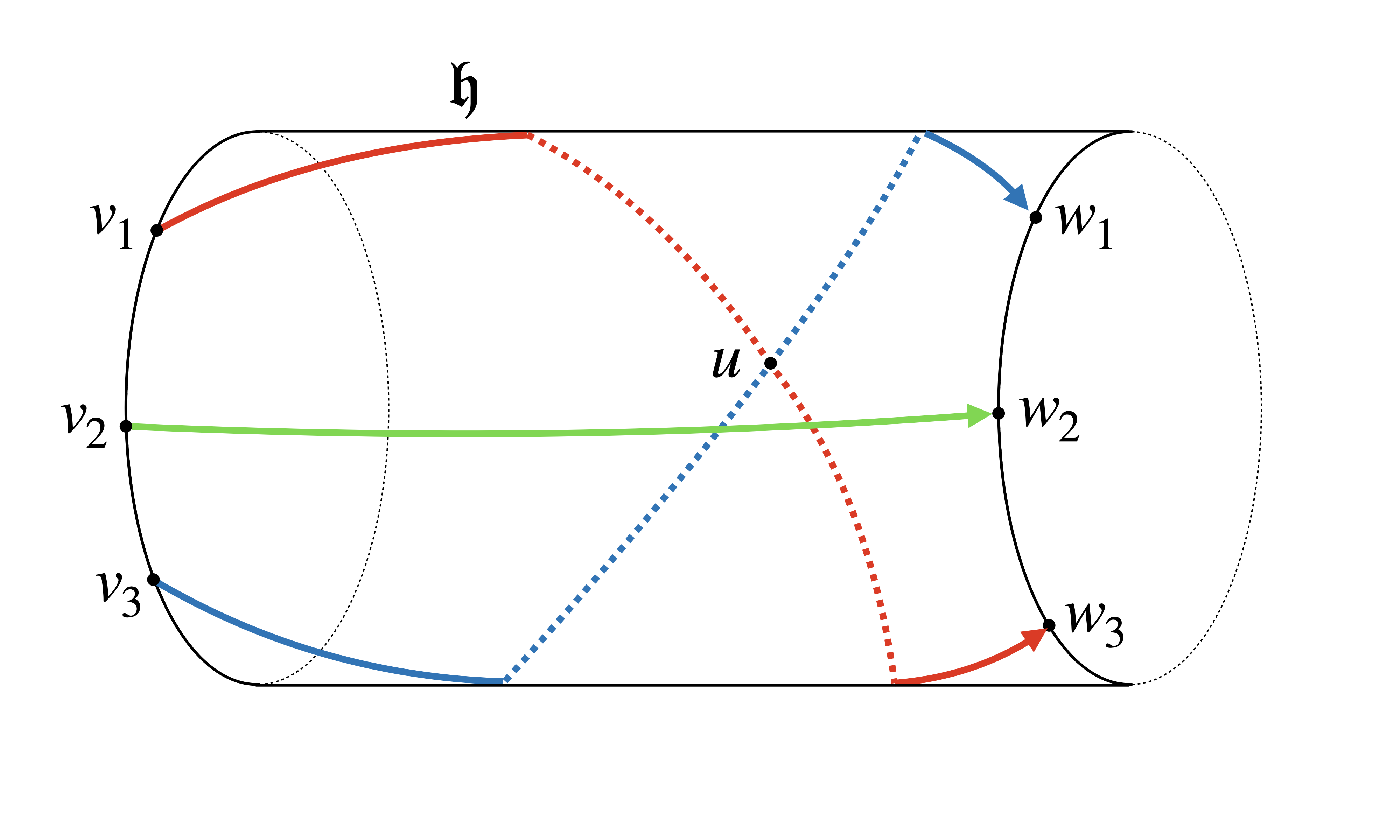}
\end{center}
The folded and unfolded weight matrices for this network are as follows:
\begin{align*}
\begin{bmatrix}
1 & 0 & t^{-1} \\
0 & 1 & 0 \\
t & 0 & 1 \\
\end{bmatrix}
& \underset{\text{Unfold}}{\leadsto}
\left[\begin{array}{ccc|ccc|c}
1 & 0 & 0 & 0 & 0 & 0 & \ldots \\
0 & 1 & 0 & 0 & 0 & 0 & \ldots \\
0 & 0 & 1 & 1 & 0 & 0 & \ldots \\ \hline
0 & 0 & 1 & 1 & 0 & 0 & \ldots \\
0 & 0 & 0 & 0 & 1 & 0 & \ldots \\
0 & 0 & 0 & 0 & 0 & 1 & \ldots \\ \hline
\vdots & \vdots & \vdots & \vdots & \vdots & \vdots & \ddots
\end{array} \right]
\end{align*}
\end{ex}

Lam and Pylyavskyy showed that the above definitions give rise to a cylindrical analogue of the classical Gessel-Lindstr\"{o}m-Viennot lemma. 

\begin{lem}[\cite{lam-pylyavskyy}]\label{lem:cylindrical-GLV}
Suppose $N$ is a cylindrical network, and $W(N)$ its weight matrix. For $I = \{i_1<i_2<\ldots<i_k\} \subseteq \mathbb{N}$ and  $J = \{j_1<j_2<\ldots<j_k\} \subseteq \mathbb{N}$, let $\Phi(I,J)$ be the set of all families of pairwise uncrossed paths $\{p_1, \ldots, p_k\}$ such that $p_\ell$ is an $(i_\ell, j_\ell)$-path for all $1 \leq \ell \leq k$. Then \[|W(N)[I,J]| = \sum_{P \in \Phi(I,J)} \prod_{p \in P} \mathrm{wt}(p).\] In particular, if $N$ has nonnegative edge weights, then $W(N)$ is totally nonnegative.
\end{lem}

In fact, by proving factorization theorems for the loop group, Lam and Pylyavskyy were also able to prove the following partial generalization of Brenti's theorem.

\begin{thm}[\cite{lam-pylyavskyy}]\label{thm:invertible-loops-representable}
Any $(n,n)$-periodic, invertible block-Toeplitz TN matrix is the weight matrix of a cylindrical network with nonnegative edge weights.
\end{thm}

Our primary goal in the remainder of the paper is to extend this generalization to include $(n,m)$-periodic matrices for which $n \neq m$ and which are not necessarily invertible. The proofs methods of Lam and Pylyavskyy will not readily extend to this context. Although we will also proceed by factorization argument, those authors relied on a lemma which applies only to full-rank matrices---namely that, for such matrices, checking that only row- or column-solid minors are positive is enough for total positivity. Instead, we will employ an argument which checks all minors explicitly, and which employs certain combinatorial matrix functions which we describe next.

\subsection{Temperley-Lieb Immanants}\label{sec:background_tl}

In this section, we recall some facts about Temperley-Lieb immanants, first introduced by Rhoades and Skandera~\cite{rhoades-skandera}. Temperley-Lieb immanants are certain functions on matrices that are defined via the combinatorics of noncrossing matchings. They are generally unwieldy to compute. Luckily, we need not compute even a single Temperley-Lieb immanant explicitly in order to prove the main theorem. Instead, we just rely on a pair of key results from Rhoades and Skandera, which appear in this work as Lemmas~\ref{lem:tl-pos}~and~\ref{lem:tl-compat}. The next two sections are dedicated to stating these lemmas.

\begin{dfn}\label{dfn:tl-alg}
Given \(n \geq 2\) and a formal parameter \(\xi\), the \textit{Temperley-Lieb algebra} \(\mathrm{TL}_n(\xi)\) is a \(\mathbb{C}[\xi]\)-algebra generated by \(t_1, t_2, \ldots, t_{n-1}\) and subject to the following relations:
    \[t_i^2 = \xi t_i, \quad t_it_jt_i = t_i \text{ if } |i-j|=1, \,\, \text{ and } \,\, t_it_j = t_jt_i \text{ if } |i-j| \geq 2.\]
\end{dfn}

\begin{rmk}\label{rmk:tl-visual}
The multiplication rules in $\mathrm{TL}_n(\xi)$ can be interpreted visually. To each $t_i$, associate a noncrossing matching of $2n$ points in the following way:
    \begin{center}
    \begin{tikzpicture}[x=.75cm, y=.5cm]
    \draw[circle, fill=black](0, 0) circle[radius = \Radius{1}] node {};
    \draw[circle, fill=black](0, -1) circle[radius = \Radius{1}] node {};
    \draw[circle, fill=black](0, -2) circle[radius = \Radius{1}] node {};
    \draw[circle, fill=black](0, -4) circle[radius = \Radius{1}] node {};
    \draw[circle, fill=black](0, -5) circle[radius = \Radius{1}] node {};
    \draw[circle, fill=black](1, -5) circle[radius = \Radius{1}] node {};
    \draw[circle, fill=black](1, -4) circle[radius = \Radius{1}] node {};
    \draw[circle, fill=black](1, -2) circle[radius = \Radius{1}] node {};
    \draw[circle, fill=black](1, -1) circle[radius = \Radius{1}] node {};
    \draw[circle, fill=black](1, 0) circle[radius = \Radius{1}] node {};
    \draw (0, 0) to [bend left=45] (0, -1);
    \draw (1, 0) to [bend right=45] (1, -1);
    \draw (0, -2) to (1, -2);
    \draw (0, -4) to (1, -4);
    \draw (0, -5) to (1, -5);
    \draw (.5, -3) node {\(\vdots\)};
    \draw (1.5,-4.5) node[right] {,};
    \draw (.5,-6) node[below] {\(t_1\)};
    \end{tikzpicture}
    \begin{tikzpicture}[x=.75cm, y=.5cm]
    \draw[circle, fill=black](0, 0) circle[radius = \Radius{1}] node {};
    \draw[circle, fill=black](0, -1) circle[radius = \Radius{1}] node {};
    \draw[circle, fill=black](0, -2) circle[radius = \Radius{1}] node {};
    \draw[circle, fill=black](0, -4) circle[radius = \Radius{1}] node {};
    \draw[circle, fill=black](0, -5) circle[radius = \Radius{1}] node {};
    \draw[circle, fill=black](1, -5) circle[radius = \Radius{1}] node {};
    \draw[circle, fill=black](1, -4) circle[radius = \Radius{1}] node {};
    \draw[circle, fill=black](1, -2) circle[radius = \Radius{1}] node {};
    \draw[circle, fill=black](1, -1) circle[radius = \Radius{1}] node {};
    \draw[circle, fill=black](1, 0) circle[radius = \Radius{1}] node {};
    \draw (0, 0) to (1, 0);
    \draw (0, -1) to [bend left=45] (0, -2);
    \draw (1, -1) to [bend right=45] (1, -2);
    \draw (0, -4) to (1, -4);
    \draw (0, -5) to (1, -5);
    \draw (.5, -3) node {\(\vdots\)};
    \draw (1.5,-4.5) node[right] {\(, \ldots,\)};
    \draw (.5,-6) node[below] {\(t_2\)};
    \end{tikzpicture}
    \begin{tikzpicture}[x=.75cm, y=.5cm]
    \draw[circle, fill=black](0, 0) circle[radius = \Radius{1}] node {};
    \draw[circle, fill=black](0, -1) circle[radius = \Radius{1}] node {};
    \draw[circle, fill=black](0, -2) circle[radius = \Radius{1}] node {};
    \draw[circle, fill=black](0, -4) circle[radius = \Radius{1}] node {};
    \draw[circle, fill=black](0, -5) circle[radius = \Radius{1}] node {};
    \draw[circle, fill=black](1, -5) circle[radius = \Radius{1}] node {};
    \draw[circle, fill=black](1, -4) circle[radius = \Radius{1}] node {};
    \draw[circle, fill=black](1, -2) circle[radius = \Radius{1}] node {};
    \draw[circle, fill=black](1, -1) circle[radius = \Radius{1}] node {};
    \draw[circle, fill=black](1, 0) circle[radius = \Radius{1}] node {};
    \draw (0, 0) to (1, 0);
    \draw (0, -1) to (1, -1);
    \draw (0, -2) to (1, -2);
    \draw (.5, -3) node {\(\vdots\)};
    \draw (0, -4) to [bend left=45] (0, -5);
    \draw (1, -4) to [bend right=45] (1, -5);
    \draw (1.5,-4.5) node[right] {.};
    \draw (.5,-6) node[below] {\(t_{n-1}\)};
    \end{tikzpicture}
    \end{center}
To multiply matchings, first concatenate them left-to-right; second, remove all loops to obtain a single noncrossing matching of \(2n\) points; and finally, multiply the resulting diagram by a factor of \(\xi\) for each loop removed.  
\end{rmk}

\begin{ex}
In \(\mathrm{TL}_5(\xi)\), we have
        \begin{align*}
    (t_1t_2)(t_2+t_3) &=
    t_1t_2^2 + t_1t_2t_3 \\
    &=
    \begin{tikzpicture}[baseline,x=.75cm, y=.5cm]
    \begin{scope}[shift={(0,2.5)}]
    \draw[circle, fill=black](0, 0) circle[radius = 1mm] node {};
    \draw[circle, fill=black](0, -1) circle[radius = 1mm] node {};
    \draw[circle, fill=black](0, -2) circle[radius = 1mm] node {};
    \draw[circle, fill=black](0, -3) circle[radius = 1mm] node {};
    \draw[circle, fill=black](0, -4) circle[radius = 1mm] node {};
    \draw[circle, fill=black](1, -4) circle[radius = 1mm] node {};
    \draw[circle, fill=black](1, -3) circle[radius = 1mm] node {};
    \draw[circle, fill=black](1, -2) circle[radius = 1mm] node {};
    \draw[circle, fill=black](1, -1) circle[radius = 1mm] node {};
    \draw[circle, fill=black](1, 0) circle[radius = 1mm] node {};
    \draw (0, 0) to [bend left=45] (0, -1);
    \draw (1, 0) to [bend right=45] (1, -1);
    \draw (0, -2) to (1, -2);
    \draw (0, -3) to (1, -3);
    \draw (0, -4) to (1, -4);
    \end{scope}
    \begin{scope}[shift={(1,2.5)}]
    \draw[circle, fill=black](0, 0) circle[radius = 1mm] node {};
    \draw[circle, fill=black](0, -1) circle[radius = 1mm] node {};
    \draw[circle, fill=black](0, -2) circle[radius = 1mm] node {};
    \draw[circle, fill=black](0, -3) circle[radius = 1mm] node {};
    \draw[circle, fill=black](0, -4) circle[radius = 1mm] node {};
    \draw[circle, fill=black](1, -4) circle[radius = 1mm] node {};
    \draw[circle, fill=black](1, -3) circle[radius = 1mm] node {};
    \draw[circle, fill=black](1, -2) circle[radius = 1mm] node {};
    \draw[circle, fill=black](1, -1) circle[radius = 1mm] node {};
    \draw[circle, fill=black](1, 0) circle[radius = 1mm] node {};
    \draw (0, 0) to (1, 0);
    \draw (0, -1) to [bend left=45] (0, -2);
    \draw (1, -1) to [bend right=45] (1, -2);
    \draw (0, -3) to (1, -3);
    \draw (0, -4) to (1, -4);
    \end{scope}
    \begin{scope}[shift={(2,2.5)}]
    \draw[circle, fill=black](0, 0) circle[radius = 1mm] node {};
    \draw[circle, fill=black](0, -1) circle[radius = 1mm] node {};
    \draw[circle, fill=black](0, -2) circle[radius = 1mm] node {};
    \draw[circle, fill=black](0, -3) circle[radius = 1mm] node {};
    \draw[circle, fill=black](0, -4) circle[radius = 1mm] node {};
    \draw[circle, fill=black](1, -4) circle[radius = 1mm] node {};
    \draw[circle, fill=black](1, -3) circle[radius = 1mm] node {};
    \draw[circle, fill=black](1, -2) circle[radius = 1mm] node {};
    \draw[circle, fill=black](1, -1) circle[radius = 1mm] node {};
    \draw[circle, fill=black](1, 0) circle[radius = 1mm] node {};
    \draw (0, 0) to (1, 0);
    \draw (0, -1) to [bend left=45] (0, -2);
    \draw (1, -1) to [bend right=45] (1, -2);
    \draw (0, -3) to (1, -3);
    \draw (0, -4) to (1, -4);
    \end{scope}
    \end{tikzpicture}
    +
    \begin{tikzpicture}[baseline,x=.75cm, y=.5cm]
    \begin{scope}[shift={(-1,2.5)}]
    \draw[circle, fill=black](0, 0) circle[radius = 1mm] node {};
    \draw[circle, fill=black](0, -1) circle[radius = 1mm] node {};
    \draw[circle, fill=black](0, -2) circle[radius = 1mm] node {};
    \draw[circle, fill=black](0, -3) circle[radius = 1mm] node {};
    \draw[circle, fill=black](0, -4) circle[radius = 1mm] node {};
    \draw[circle, fill=black](1, -4) circle[radius = 1mm] node {};
    \draw[circle, fill=black](1, -3) circle[radius = 1mm] node {};
    \draw[circle, fill=black](1, -2) circle[radius = 1mm] node {};
    \draw[circle, fill=black](1, -1) circle[radius = 1mm] node {};
    \draw[circle, fill=black](1, 0) circle[radius = 1mm] node {};
    \draw (0, 0) to [bend left=45] (0, -1);
    \draw (1, 0) to [bend right=45] (1, -1);
    \draw (0, -2) to (1, -2);
    \draw (0, -3) to (1, -3);
    \draw (0, -4) to (1, -4);
    \end{scope}
    \begin{scope}[shift={(0,2.5)}]
    \draw[circle, fill=black](0, 0) circle[radius = 1mm] node {};
    \draw[circle, fill=black](0, -1) circle[radius = 1mm] node {};
    \draw[circle, fill=black](0, -2) circle[radius = 1mm] node {};
    \draw[circle, fill=black](0, -3) circle[radius = 1mm] node {};
    \draw[circle, fill=black](0, -4) circle[radius = 1mm] node {};
    \draw[circle, fill=black](1, -4) circle[radius = 1mm] node {};
    \draw[circle, fill=black](1, -3) circle[radius = 1mm] node {};
    \draw[circle, fill=black](1, -2) circle[radius = 1mm] node {};
    \draw[circle, fill=black](1, -1) circle[radius = 1mm] node {};
    \draw[circle, fill=black](1, 0) circle[radius = 1mm] node {};
    \draw (0, 0) to (1, 0);
    \draw (0, -1) to [bend left=45] (0, -2);
    \draw (1, -1) to [bend right=45] (1, -2);
    \draw (0, -3) to (1, -3);
    \draw (0, -4) to (1, -4);
    \end{scope}
    \begin{scope}[shift={(1,2.5)}]
    \draw[circle, fill=black](0, 0) circle[radius = 1mm] node {};
    \draw[circle, fill=black](0, -1) circle[radius = 1mm] node {};
    \draw[circle, fill=black](0, -2) circle[radius = 1mm] node {};
    \draw[circle, fill=black](0, -3) circle[radius = 1mm] node {};
    \draw[circle, fill=black](0, -4) circle[radius = 1mm] node {};
    \draw[circle, fill=black](1, -4) circle[radius = 1mm] node {};
    \draw[circle, fill=black](1, -3) circle[radius = 1mm] node {};
    \draw[circle, fill=black](1, -2) circle[radius = 1mm] node {};
    \draw[circle, fill=black](1, -1) circle[radius = 1mm] node {};
    \draw[circle, fill=black](1, 0) circle[radius = 1mm] node {};
    \draw (0, 0) to (1, 0);
    \draw (0, -1) to (1, -1);
    \draw (0, -2) to [bend left=45] (0, -3);
    \draw (1, -2) to [bend right=45] (1, -3);
    \draw (1,-4) to (0,-4);
    \end{scope}
    \end{tikzpicture} \\
    &=
    \xi \cdot \begin{tikzpicture}[baseline,x=.75cm, y=.5cm]
    \begin{scope}[shift={(0,2.5)}]
    \draw[circle, fill=black](0, 0) circle[radius = 1mm] node {};
    \draw[circle, fill=black](0, -1) circle[radius = 1mm] node {};
    \draw[circle, fill=black](0, -2) circle[radius = 1mm] node {};
    \draw[circle, fill=black](0, -3) circle[radius = 1mm] node {};
    \draw[circle, fill=black](0, -4) circle[radius = 1mm] node {};
    \draw[circle, fill=black](1, -4) circle[radius = 1mm] node {};
    \draw[circle, fill=black](1, -3) circle[radius = 1mm] node {};
    \draw[circle, fill=black](1, -2) circle[radius = 1mm] node {};
    \draw[circle, fill=black](1, -1) circle[radius = 1mm] node {};
    \draw[circle, fill=black](1, 0) circle[radius = 1mm] node {};
    \draw (0, 0) to [bend left=45] (0, -1);
    \draw (0, -2) to (1, 0);
    \draw (1, -1) to [bend right=45] (1, -2);
    \draw (0, -3) to (1, -3);
    \draw (0, -4) to (1, -4);
    \end{scope}
    \end{tikzpicture}
    +
    \begin{tikzpicture}[baseline,x=.75cm, y=.5cm]
    \begin{scope}[shift={(0,2.5)}]
    \draw[circle, fill=black](0, 0) circle[radius = 1mm] node {};
    \draw[circle, fill=black](0, -1) circle[radius = 1mm] node {};
    \draw[circle, fill=black](0, -2) circle[radius = 1mm] node {};
    \draw[circle, fill=black](0, -3) circle[radius = 1mm] node {};
    \draw[circle, fill=black](0, -4) circle[radius = 1mm] node {};
    \draw[circle, fill=black](1, -4) circle[radius = 1mm] node {};
    \draw[circle, fill=black](1, -3) circle[radius = 1mm] node {};
    \draw[circle, fill=black](1, -2) circle[radius = 1mm] node {};
    \draw[circle, fill=black](1, -1) circle[radius = 1mm] node {};
    \draw[circle, fill=black](1, 0) circle[radius = 1mm] node {};
    \draw (0, 0) to [bend left=45] (0, -1);
    \draw (0, -2) to (1, 0);
    \draw (0, -3) to (1, -1);
    \draw (1, -2) to [bend right=45] (1, -3);
    \draw (0, -4) to (1, -4);
    \end{scope}
    \end{tikzpicture}.
    \end{align*}
\end{ex}

As is more apparent from the visual interpretation of multiplication, the set of \textit{all} possible noncrossing matchings of \(2n\) points forms a basis for \(\mathrm{TL}_n(\xi)\). Thus, as a \(\mathbb{C}[\xi]\)-vector space, the dimension of \(\mathrm{TL}_n(\xi)\) is the \(n\)th Catalan number $C_n$. For example, since $C_3=5$, we know $\mathrm{TL}_3(\xi)$ is $5$-dimensional and has the following basis:
    \[\mathcal{B}_3 = \left\{\vcenter{\hbox{\begin{tikzpicture}[baseline,x=.75cm, y=.5cm]
    \draw[circle, fill=black](0, 0) circle[radius = 1mm] node {};
    \draw[circle, fill=black](0, -1) circle[radius = 1mm] node {};
    \draw[circle, fill=black](0, -2) circle[radius = 1mm] node {};
    \draw[circle, fill=black](1, -2) circle[radius = 1mm] node {};
    \draw[circle, fill=black](1, -1) circle[radius = 1mm] node {};
    \draw[circle, fill=black](1, 0) circle[radius = 1mm] node {};
    \draw (0, -2) to (1, -2);
    \draw (0, 0) to (1, 0);
    \draw (0, -1) to (1, -1);
    \end{tikzpicture}}}, \,\,
    \vcenter{\hbox{\begin{tikzpicture}[baseline,x=.75cm, y=.5cm]
    \draw[circle, fill=black](0, 0) circle[radius = 1mm] node {};
    \draw[circle, fill=black](0, -1) circle[radius = 1mm] node {};
    \draw[circle, fill=black](0, -2) circle[radius = 1mm] node {};
    \draw[circle, fill=black](1, -2) circle[radius = 1mm] node {};
    \draw[circle, fill=black](1, -1) circle[radius = 1mm] node {};
    \draw[circle, fill=black](1, 0) circle[radius = 1mm] node {};
    \draw (0, -2) to (1, -2);
    \draw (0, 0) to [bend left=45] (0, -1);
    \draw (1, 0) to [bend right=45] (1, -1);
    \end{tikzpicture}}}, \,\,
    \vcenter{\hbox{\begin{tikzpicture}[baseline,x=.75cm, y=.5cm]
    \draw[circle, fill=black](0, 0) circle[radius = 1mm] node {};
    \draw[circle, fill=black](0, -1) circle[radius = 1mm] node {};
    \draw[circle, fill=black](0, -2) circle[radius = 1mm] node {};
    \draw[circle, fill=black](1, -2) circle[radius = 1mm] node {};
    \draw[circle, fill=black](1, -1) circle[radius = 1mm] node {};
    \draw[circle, fill=black](1, 0) circle[radius = 1mm] node {};
    \draw (0, 0) to (1, 0);
    \draw (0, -1) to [bend left=45] (0, -2);
    \draw (1, -1) to [bend right=45] (1, -2);
    \end{tikzpicture}}}, \,\,
    \vcenter{\hbox{\begin{tikzpicture}[baseline,x=.75cm, y=.5cm]
    \draw[circle, fill=black](0, 0) circle[radius = 1mm] node {};
    \draw[circle, fill=black](0, -1) circle[radius = 1mm] node {};
    \draw[circle, fill=black](0, -2) circle[radius = 1mm] node {};
    \draw[circle, fill=black](1, -2) circle[radius = 1mm] node {};
    \draw[circle, fill=black](1, -1) circle[radius = 1mm] node {};
    \draw[circle, fill=black](1, 0) circle[radius = 1mm] node {};
    \draw (0, 0) to (1, -2);
    \draw (1, 0) to [bend right=45] (1, -1);
    \draw (0, -1) to [bend left=45] (0, -2);
    \end{tikzpicture}}}, \,\,
    \vcenter{\hbox{\begin{tikzpicture}[baseline,x=.75cm, y=.5cm]
    \draw[circle, fill=black](0, 0) circle[radius = 1mm] node {};
    \draw[circle, fill=black](0, -1) circle[radius = 1mm] node {};
    \draw[circle, fill=black](0, -2) circle[radius = 1mm] node {};
    \draw[circle, fill=black](1, -2) circle[radius = 1mm] node {};
    \draw[circle, fill=black](1, -1) circle[radius = 1mm] node {};
    \draw[circle, fill=black](1, 0) circle[radius = 1mm] node {};
    \draw (0, -2) to (1, 0);
    \draw (0, 0) to [bend left=45] (0, -1);
    \draw (1, -1) to [bend right=45] (1, -2);
    \end{tikzpicture}}}\right\}.\]

\begin{dfn}\label{dfn:tl-basis}
Let $\mathcal{B}_n$ be the set of noncrossing matchings on $2n$ points, regarded as a basis for $\mathrm{TL}_n(\xi)$.
\end{dfn}

\begin{dfn}\label{dfn:tl-phi}
Given $T \in \mathcal{B}_n$, let \(\varphi_T: S_n \to \mathbb{R}\) be given as follows: if \(w \in S_{n}\) has reduced word \(s_{j_1}s_{j_2} \cdots s_{j_k}\), let \(\varphi_T(w)\) be the coefficient of \(T\) when \(\left(t_{j_1}-1\right)\left(t_{j_2}-1\right) \cdots \left(t_{j_k}-1\right)\) is expanded in the Temperley-Lieb algebra $\mathrm{TL}_n(2)$.
\end{dfn}

\begin{dfn}\label{dfn:imm}
An \textit{immanant} is any function $I: \mathrm{Mat}_{n \times n}(\mathbb{R}) \to \mathbb{R}$ of the form
    \[I(M) = \sum_{w \in S_n} \varphi(w) M_{1, w(1)} M_{2, w(2)} \cdots M_{n, w(n)}\]
for some function $\varphi:S_n \to \mathbb{R}$, where $S_n$ is the symmetric group.
\end{dfn}

\begin{dfn}\label{dfn:tl-imm}
Given a noncrossing matching $T \in \mathcal{B}_n$, the \textit{Temperley-Lieb immanant} for \(T\) is the function $\mathrm{imm}_T \colon \mathrm{Mat}_n\left(\mathbb{R}\right) \to \mathbb{R}$ defined by
    \[\mathrm{imm}_T(M) := \sum_{w \in S_n} \varphi_T(w) M_{1,w(1)} M_{1,w(2)} \cdots M_{n,w(n)}.\]
\end{dfn}

\begin{rmk}\label{rmk:tl-notation}
When the matrix $M$ is understood, we will frequently draw the noncrossing matching $T$ rather than write $\mathrm{imm}_T(M)$.
\end{rmk}

\begin{lem}[Rhoades-Skandera \cite{rhoades-skandera}]\label{lem:tl-pos}
Suppose $M$ is a totally nonnegative $n \times n$ matrix. Then for any $T \in \mathcal{B}_n$, we have $\mathrm{imm}_T(M) \geq 0$.
\end{lem}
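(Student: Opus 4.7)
My plan is to show that every Temperley-Lieb immanant of an arbitrary $n \times n$ matrix $M$ can be expanded as a nonnegative integer combination of products of pairs of complementary minors of $M$, from which the lemma follows at once: every minor of a totally nonnegative matrix is nonnegative by definition, so any product and any nonnegative combination thereof is nonnegative.

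Concretely, writing $\Delta_I^J(M)$ for the minor of $M$ with row set $I$ and column set $J$, I would aim to establish an identity of the form
\[\mathrm{imm}_T(M) = \sum_{\substack{I, J \subseteq [n] \\ |I| = |J|}} c_T^{I,J}\,\Delta_I^J(M)\,\Delta_{I^c}^{J^c}(M)\]
with coefficients $c_T^{I,J} \in \mathbb{Z}_{\geq 0}$. Both sides are homogeneous polynomials of degree $n$ in the matrix entries, so the claim reduces to an identity in the group algebra $\mathbb{C}[S_n]$: one needs to show that the weight function $\varphi_T \colon S_n \to \mathbb{R}$ from Definition~\ref{def:tl-phi} admits a nonnegative integer expansion in terms of the ``double-minor signatures'' $w \mapsto \mathrm{sgn}(w|_I)\,\mathrm{sgn}(w|_{I^c})\,\mathbf{1}_{[w(I) = J]}$. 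The relevant pair $(I,J)$ attached to each term should be readable off of the matching $T$, via the natural way a noncrossing matching on $2n$ points partitions the top and bottom endpoints into nested blocks. One can sanity-check the proposed expansion in the case $n = 2$: $\mathrm{imm}_{T_0}(M) = \det M = \Delta_{12}^{12}(M)\,\Delta_\emptyset^\emptyset(M)$ and $\mathrm{imm}_{T_1}(M) = M_{12}M_{21} = \Delta_{1}^{2}(M)\,\Delta_{2}^{1}(M)$, each a product of complementary minor pairs with coefficient $1$.

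The main obstacle is verifying nonnegativity of the coefficients $c_T^{I,J}$, i.e., producing the expansion without any cancellation. For small $n$ this is an explicit finite computation, but the general case demands a careful combinatorial argument that seems beyond a naive unfolding of the definition, since there is no clean Cauchy--Binet-type behavior for $\mathrm{imm}_T$. The most robust route I see is to identify the Temperley-Lieb immanants with a distinguished subfamily of Kazhdan--Lusztig immanants---specifically those indexed by $321$-avoiding permutations---and then invoke Haiman's celebrated positivity theorem asserting that every Kazhdan--Lusztig immanant is nonnegative on TNN matrices. A more self-contained fallback would be a planar-network proof: realize $M$ as the path matrix of a weighted planar network with nonnegative edge weights (using the standard Lindstr\"om--Gessel--Viennot-style parametrization of TNN matrices), and reinterpret $\mathrm{imm}_T(M)$ as a weighted enumeration of non-crossing families of paths whose induced connectivity pattern is the matching $T$, which is then manifestly nonnegative.
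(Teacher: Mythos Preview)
The paper does not prove this lemma; it is quoted from Rhoades--Skandera and used as a black box, so there is no in-paper argument to compare against.

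Your primary plan---expressing each $\mathrm{imm}_T$ as a nonnegative integer combination of products $\Delta_I^J\,\Delta_{I^c}^{J^c}$---cannot succeed in general. The expansion in Lemma~\ref{lem:tl-compat} runs the \emph{other} way: every complementary minor product is a $0$--$1$ combination of Temperley--Lieb immanants, so the cone generated by such products sits inside the simplicial cone generated by the $\mathrm{imm}_T$, but the reverse inclusion fails already for $n=4$. Take the basis element $T=t_3t_2\in\mathcal{B}_4$. A direct case check over the eight colourings compatible with $T$ (fixing one colour by symmetry) shows that each such $(I,J)$ has $\lvert\Theta(I,J)\rvert\ge 2$. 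By Lemma~\ref{lem:tl-compat} and the linear independence of the $\mathrm{imm}_{T'}$, this rules out $\mathrm{imm}_T=\lambda\,\mathcal{C}_{I,J}$ for every $(I,J)$ and every $\lambda>0$; and since $\mathrm{imm}_T$ is an extreme ray of the simplicial cone spanned by all $\mathrm{imm}_{T'}$, it therefore cannot lie in the subcone generated by complementary minor products at all. Hence no expansion with $c_T^{I,J}\ge 0$ exists for this $T$. Your $n=2$ sanity check is misleading: for $n\le 3$ the two cones happen to coincide.

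Your two fallbacks are both valid. Identifying Temperley--Lieb immanants with Kazhdan--Lusztig immanants indexed by $321$-avoiding permutations and invoking Haiman's theorem is correct but imports a much deeper result to establish a special case. The planar-network argument---realise a totally nonnegative $M$ as the path matrix of a planar network with nonnegative weights and interpret $\mathrm{imm}_T(M)$ as a weighted count of path families whose induced pairing on endpoints is $T$---is the self-contained route and is essentially how Rhoades and Skandera establish the result.
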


The positivity property in Lemma~\ref{lem:tl-pos} is remarkable. It remains an intriguing open problem to classify all functions which are nonnegative on totally nonnegative matrices; these are known as \textit{totally positive functions}. It is known, for example, that immanants obtained by setting $\varphi$ equal to an irreducible $S_n$ character are totally positive~\cite{stembridge}.

\subsection{Complementary Minor Immanants and Their Diagrams}\label{sec:cmd}

In this section, we describe notation for complementary minor immanants, i.e. pairs of minors with complementary row and column sets. This notation was introduced by Rhoades and Skandera \cite{rhoades-skandera} to state Lemma~\ref{lem:tl-compat}, which will be indispensable in the proof of Theorem~\ref{thm:main}.

\begin{dfn}\label{dfn:comp-imms}
Given an $n \times n$ matrix $M$ and $I,J \subseteq [n]$ with $|I|=|J|$, the \textit{complementary minor immanant} of $M$ with respect to $I$ and $J$ is
    \[\mathcal{C}_{I,J}(M) := \left|M[I,J]\right|\cdot\left|M[[n] \setminus I, \,\, [n] \setminus J]\right|.\]
\end{dfn}

\begin{dfn}\label{dfn:comp-minor-diagram}
Given $I,J \subseteq [n]$ with $|I|=|J|$, the associated \textit{complementary minor diagram} is a $2 \times n$ array of dots colored black or white in the following way:
    \begin{itemize}
    \item in the left column, color the \(j\)th dot from the top black if \(j \in J\) and white otherwise;
    \item in the right column, color the \(i\)th dot from the top white if \(i \in I\) and black otherwise.
    \end{itemize}
\end{dfn}

\begin{ex}\label{ex:cmd}
If $n=5$, $I=\{1,2,4\}$, and $J=\{1,3,5\}$, Definition~\ref{dfn:comp-minor-diagram} gives the following diagram:
    \[\begin{tikzpicture}[baseline,x=.75cm, y=.5cm]
    \begin{scope}[shift={(0,2.1)}]
    \draw[circle, fill=black](0, 0) circle[radius = 1mm] node {};
    \draw[circle, fill=white](0, -1) circle[radius = 1mm] node {};
    \draw[circle, fill=black](0, -2) circle[radius = 1mm] node {};
    \draw[circle, fill=white](0, -3) circle[radius = 1mm] node {};
    \draw[circle, fill=black](0, -4) circle[radius = 1mm] node {};
    \draw[circle, fill=black](1, -4) circle[radius = 1mm] node {};
    \draw[circle, fill=white](1, -3) circle[radius = 1mm] node {};
    \draw[circle, fill=black](1, -2) circle[radius = 1mm] node {};
    \draw[circle, fill=white](1, -1) circle[radius = 1mm] node {};
    \draw[circle, fill=white](1, 0) circle[radius = 1mm] node {};
    \end{scope}
    \end{tikzpicture}.\]
\end{ex}

\begin{rmk}\label{rmk:cm-notation}
Since $I$ and $J$ can be read off of a complementary minor diagram, when the matrix $M$ is understood, we will frequently draw a complementary minor diagram rather than writing $\mathcal{C}_{I,J}(M)$.
\end{rmk}

\begin{dfn}\label{dfn:tl-compat}
Given $I,J \subseteq [n]$ with $|I|=|J|$, let $\Theta(I,J)$ be the set of noncrossing matchings $T \in \mathcal{B}_n$ such that every strand in $T$ connects a white dot to a black dot in the complementary minor diagram associated to $I$ and $J$. If $T \in \Theta(I,J)$, we say $T$ is $(I,J)$-compatible.
\end{dfn}

\begin{lem}[Rhoades-Skandera \cite{rhoades-skandera}]\label{lem:tl-compat}
For any $n \times n$ matrix $M$ and $I, J \subseteq [n]$ with $|I|=|J|$, we have
    \[\mathcal{C}_{I,J}(M) = \sum_{T \in \Theta(I,J)} \mathrm{imm}_T(M).\]
\end{lem}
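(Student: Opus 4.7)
My plan is to compare both sides of the claimed identity as polynomials in the entries of a generic matrix $M$. Each side is an immanant (Definition~\ref{def:imm}) and is therefore determined by the coefficient it attaches to each monomial $M_{1,w(1)} M_{2,w(2)} \cdots M_{n,w(n)}$ indexed by $w \in S_n$, so it suffices to match these coefficients permutation-by-permutation.

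For the LHS, expanding both determinants via the Leibniz formula and multiplying gives
\[\mathcal{C}_{I,J}(M) = \sum_{w \in S_n} \epsilon_{I,J}(w)\, M_{1,w(1)} \cdots M_{n,w(n)},\]
where $\epsilon_{I,J}(w) = 0$ unless $w(I) = J$ (equivalently $w([n]\setminus I) = [n]\setminus J$), in which case $\epsilon_{I,J}(w)$ equals the product of the signs of the induced bijections $w|_I \colon I \to J$ and $w|_{[n]\setminus I} \colon [n]\setminus I \to [n]\setminus J$ (relative to the natural orderings of these sets). For the RHS, I would introduce the algebra homomorphism $\sigma \colon \mathbb{C}[S_n] \to \mathrm{TL}_n(2)$ defined on generators by $\sigma(s_i) = t_i - 1$. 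A direct check---using $t_i^2 = 2 t_i$ to verify $\sigma(s_i)^2 = 1$, and $t_i t_{i+1} t_i = t_i$ to verify $\sigma(s_i s_{i+1} s_i) = \sigma(s_{i+1} s_i s_{i+1})$---shows $\sigma$ is well-defined. Then Definition~\ref{def:tl-phi} says $\sigma(w) = \sum_{T \in \mathcal{B}_n} \varphi_T(w)\, T$, so the coefficient of $M_{1,w(1)} \cdots M_{n,w(n)}$ on the RHS equals $\lambda_{I,J}(\sigma(w))$, where $\lambda_{I,J} \colon \mathrm{TL}_n(2) \to \mathbb{R}$ is the linear functional sending $T \in \mathcal{B}_n$ to $1$ when $T \in \Theta(I,J)$ and to $0$ otherwise. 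The desired identity thus reduces to the claim
\[\lambda_{I,J}\bigl(\sigma(w)\bigr) = \epsilon_{I,J}(w) \qquad \text{for every } w \in S_n.\]

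I would establish this claim by induction on the Coxeter length $\ell(w)$. For the base case $w = e$, the element $\sigma(e) \in \mathrm{TL}_n(2)$ is the identity matching (all horizontal strands), which belongs to $\Theta(I,J)$ if and only if every horizontal strand joins a white dot on the left to a black dot on the right, i.e.\ iff $I = J$, matching the value of $\epsilon_{I,J}(e)$. For the inductive step, write $w = w' s_i$ with $\ell(w') = \ell(w) - 1$, so $\sigma(w) = \sigma(w')(t_i - 1)$. Right multiplication by $(t_i - 1)$ is a concrete operation on each basis matching, implementing a local alteration near positions $i, i+1$ and possibly producing a loop that contributes the factor $\xi = 2$. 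The key step is a case analysis on the colors of the $i$th and $(i+1)$th dots in the right column of the complementary minor diagram (which is determined by $I$): one verifies that the change in $\lambda_{I,J}(\sigma(w))$ from $w'$ to $w$ exactly mirrors the sign change in $\epsilon_{I,J}$ induced by right multiplication by $s_i$.

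The main obstacle is this case analysis, where three threads must be synchronized: (i) the combinatorics of matching reduction under Temperley-Lieb multiplication, (ii) the loop factor $\xi = 2$ (which is precisely what makes $\sigma(s_i)^2 = 1$), and (iii) the sign bookkeeping for $\epsilon_{I,J}$ under transpositions, which depends on the relative position of the transposed indices within $I$ and $[n]\setminus I$. Once adjacent transpositions are handled in each of the several cases, the induction closes and the identity follows.
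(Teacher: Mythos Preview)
The paper does not prove this lemma at all: it is stated with attribution to Rhoades--Skandera \cite{rhoades-skandera} and used as a black box, so there is no ``paper's own proof'' to compare against.

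Your outline is a correct and workable approach, and in fact is close in spirit to how such identities are typically established. The homomorphism $\sigma(s_i)=t_i-1$ is well-defined as you check, and reducing the identity to $\lambda_{I,J}(\sigma(w))=\epsilon_{I,J}(w)$ is exactly right. Two small points would tighten the inductive step. First, the induction should be stated uniformly over \emph{all} pairs $(I,J)$ at once, not for a fixed pair: in the mixed case where exactly one of $i,i+1$ lies in $I$, the key identity one needs is
\[
\lambda_{I,J}(x\,t_i)=\lambda_{I,J}(x)+\lambda_{s_iI,\,J}(x)\qquad(x\in\mathrm{TL}_n(2)),
\]
which is checked on basis matchings by a short two-subcase argument (arc at $i,i+1$ present or not; the loop factor $\xi=2$ is precisely what makes the first subcase balance). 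Applying this with $x=\sigma(w')$ and the inductive hypothesis for the pair $(s_iI,J)$ gives $\lambda_{I,J}(\sigma(w))=\epsilon_{s_iI,J}(w')$, and a one-line sign check shows $\epsilon_{s_iI,J}(w')=\epsilon_{I,J}(w's_i)$ because replacing $i$ by $i+1$ in $I$ (with $i+1\notin I$) does not change relative orders. Second, in the same-color case ($i,i+1$ both in $I$ or both in $I^c$), you should note explicitly that every matching in $\sigma(w')t_i$ carries a monochromatic right-arc at $i,i+1$, so $\lambda_{I,J}(\sigma(w')t_i)=0$ immediately; the sign flip on the $\epsilon$ side is then the transposition inside one block. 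With these two observations the case analysis closes without difficulty.
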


\begin{ex}\label{ex:compatibility}
Applying Lemma~\ref{lem:tl-compat} to the complementary minor diagram from Example~\ref{ex:cmd}, we obtain
    \input{Tikz-Pictures/compatibility-example}
We are suppressing the matrix $M$ in both sides of the equation; see Remarks~\ref{rmk:tl-notation}~and~\ref{rmk:cm-notation}.
\end{ex}

\begin{cor}\label{lem:tl-zero}
If $M$ is a totally nonnegative matrix and $T \in \mathcal{B}_n$ is $(I,J)$-compatible for some pair of sets $I,J$ such that $\mathcal{C}_{I,J}(M)=0,$ then $\mathrm{imm}_T(M)=0$.
\end{cor}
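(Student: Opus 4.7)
The plan is to deduce this directly from the two preceding lemmas: Lemma~\ref{lem:tl-compat} expresses the complementary minor immanant as a sum over compatible matchings, and Lemma~\ref{lem:tl-pos} guarantees each such summand is nonnegative on a totally nonnegative matrix. The corollary should then follow from the elementary fact that a sum of nonnegative reals can vanish only if each summand vanishes.

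Concretely, I would first apply Lemma~\ref{lem:tl-compat} to the pair $(I,J)$ in question to obtain the expansion
\[\mathcal{C}_{I,J}(M) = \sum_{T' \in \Theta(I,J)} \mathrm{imm}_{T'}(M).\]
Next, I would invoke Lemma~\ref{lem:tl-pos}: since $M$ is totally nonnegative, every term $\mathrm{imm}_{T'}(M)$ on the right-hand side is nonnegative. The hypothesis $\mathcal{C}_{I,J}(M) = 0$ then forces each individual term in the sum to be zero. Since $T$ is assumed to be $(I,J)$-compatible, we have $T \in \Theta(I,J)$, so $\mathrm{imm}_T(M)$ is one of the summands and must therefore equal zero.

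There is no substantive obstacle here; the only step requiring any care is confirming that $T$ itself appears in the indexing set of the sum, which is exactly what the definition of $(I,J)$-compatibility provides. The result is essentially a bookkeeping consequence of the Rhoades--Skandera framework, and its importance for the sequel is not that it is difficult but that it gives a flexible mechanism for extracting vanishing information about individual Temperley--Lieb immanants from the vanishing of ordinary products of complementary minors of $H(p_0,p_1)$.
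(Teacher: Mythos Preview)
Your proposal is correct and follows essentially the same approach as the paper: expand $\mathcal{C}_{I,J}(M)$ via Lemma~\ref{lem:tl-compat}, observe each summand is nonnegative by Lemma~\ref{lem:tl-pos}, and conclude that every term---in particular $\mathrm{imm}_T(M)$---vanishes.
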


\begin{proof}
By Lemma~\ref{lem:tl-compat}, the immanant $\mathcal{C}_{I,J}$ is a positive linear combination of Temperley-Lieb immanants, one of which is $\mathrm{imm}_T(M)$. By Lemma~\ref{lem:tl-pos}, each of those summands is nonnegative. Since their sum is $\mathcal{C}_{I,J}(M)=0$, each Temperley-Lieb immanant in that sum must be $0$ individually. In particular, $\mathrm{imm}_T(M)=0$.
\end{proof}

\section{Main Result}\label{sec:main}

\subsection{Southwest Corner Elimination}\label{sec:redux}

In this section, we explain how Theorem~\ref{thm:main} reduces to Proposition~\ref{lem:main}, which states that certain row operations on infinite block-Toeplitz matrices preserve total nonnegativity. The main idea is that any given $(n,m)$-periodic TN matrix $M$ can be written as $M = R \cdot M'$, where $R$ is a simple matrix achieving a periodic row operation and $M'$ is an $(n,m)$-periodic matrix with fewer nonzero entries. The matrix $R$ will easily be seen to be representable as a cylindrical network; that $M'$ is representable will follow by induction. Before presenting Proposition~\ref{lem:main}, we first provide some prerequisite definitions. Then we will prove Theorem~\ref{thm:main} on the assumption of Proposition~\ref{lem:main}.


\begin{dfn}\label{dfn:periodic-row-ops}
We say that an $(n,m)$-periodic matrix $M'$ is obtained from another such matrix $M$ via a \textit{periodic elementary row operation} if
    \[M'_{i,j} = \begin{cases}
    M_{i,j} - c \cdot M_{i-1, j} & \text{ if } i \equiv k \bmod n \\
    M_{i,j} & \text{ otherwise}
    \end{cases}\]
For some real constant $c$ and residue class $k$ modulo $n$.
\end{dfn}

\begin{lem}\label{lem:elimination-representable}
Suppose $M = R \cdot M'$, where $M'$ is obtained from $M$ via a periodic elementary row operation. Then the matrix $R$ is equal to the weight matrix of a cylindrical network. 
\end{lem}

\begin{proof}
Observe that $R$ is an $(n,n)$-periodic matrix with $1$s on the main diagonal, the constant $c$ in all entries of the form $(k, k-1)$ modulo $n$, and $0$s elsewhere. This is the weight matrix of the cylindrical network with $n$ edges of weight $1$ from each source $v_i$ to the corresponding sink $w_i$, and one edge of weight $c$ from source $v_k$ to sink $w_{k-1}$ (taking indices modulo $n$). For example, with $n=7$ and $k=3$, we obtain the following network:
    \[\includegraphics[scale=.06]{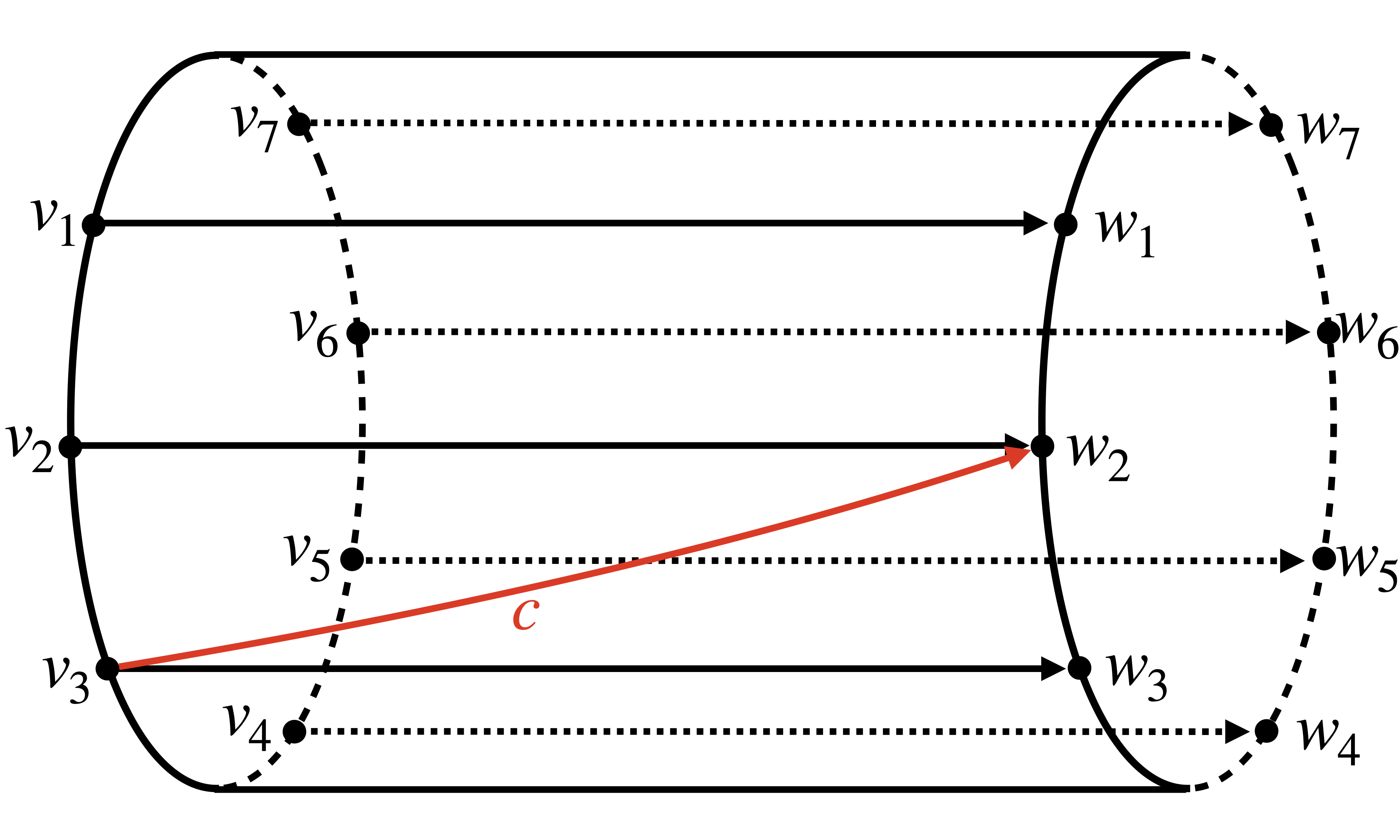}. \qedhere\]
\end{proof}

\begin{dfn}\label{dfn:convex}
A matrix $M$ is \textit{row-convex} if there does not exist a row index $\ell$ and column indices $i < j < k$ so that $M_{\ell, i} \neq 0$, $M_{\ell, j}=0$, and $M_{\ell,k} \neq 0$; in other words, each row of the matrix is a sequence with no internal zeroes. Define \textit{column-convex} matrices similarly, and say a matrix has \textit{convex support} if it is both row- and column-convex.
\end{dfn}

\begin{prp}\label{lem:tn-is-convex}
Any TN matrix with no row or column of only zeroes has convex support.
\end{prp}

\begin{proof}
Suppose $M$ is such a matrix and suppose, for the sake of contradiction, that it does not have convex support. Then it is either not row-convex or not column-convex. In the former case, we may find a row index $\ell$ and column indices $i < j < k$ so that $M_{\ell, i} \neq 0$, $M_{\ell, j}=0$, and $M_{\ell,k} \neq 0$. Since the matrix has no columns of all zeroes, there exists a nonzero entry in column $j$. If $M_{\ell', j} \neq 0$ for $\ell' < \ell$, then the following minor of $M$ is negative, a contradiction:
    \[\left|\begin{matrix} M_{\ell', i} & M_{\ell', j} \\ M_{\ell, i} & M_{\ell, j}\end{matrix}\right| = M_{\ell', i}M_{\ell, j}-M_{\ell', j}M_{\ell, i} = M_{\ell', i}\cdot 0-M_{\ell', j}M_{\ell, i} = -M_{\ell', j}M_{\ell, i}.\]
Similarly, if $\ell'>\ell$, we find the following negative minor:
    \[\left|\begin{matrix} M_{\ell, j} & M_{\ell, k} \\ M_{\ell',j} & M_{\ell',k} \end{matrix}\right| = -M_{\ell,k}M_{\ell',j}.\]
A similar argument can be used to show column-convexity.
\end{proof}

\begin{dfn}\label{dfn:SW-corners}
A \textit{SW corner} of a matrix is a nonzero entry with all entries below and to the left of it equal to $0$. A \textit{special SW corner} is a SW corner $M_{i,j}$ which is also the only nonzero entry below or to the left of $M_{i-1,j}$. Visually, a special SW corner looks like
\[\begin{tikzpicture}
\draw (0,0) grid (5,4);
\draw [fill=blue!10!white] (0,0) grid (5,4) rectangle (0,0);
\draw [fill=black!10!white] (0,0) grid (3,3) rectangle (0,0);
\draw [fill=blue!60!white] (2,1) grid (3,3) rectangle (2,1);
\node at (2.5, 1.5) {\Large $\star$};
\end{tikzpicture}\]
where $\star$ represents the special SW corner itself. Entries in solid blue are required to be nonzero, entries in gray are required to be $0$, and entries in light blue could be zero or nonzero.
\end{dfn}

\begin{lem}\label{lem:SW-corner-existence}
Suppose $M$ is a TN $(n,m)$-periodic infinite matrix with finite diagonal support, no rows of all zeroes, and $n > m$. Then $M$ has a special SW corner.
\end{lem}

\begin{proof}
Let $\ell(i) = \min \{j : M_{i,j} \neq 0\}$ be the column index of the leftmost nonzero entry in row $i$; this is well-defined since we assumed $M$ has finite diagonal support. By Proposition~\ref{dfn:periodic-row-ops}, $M$ is column-convex, so the sequence $\ell(1), \ell(2), \ldots$ is either weakly increasing or decreasing. Since $M$ is $(n,m)$-periodic, we have $\ell(i+n) = \ell(i)+m$, so the sequence must in fact be weakly increasing.

Observe that $\ell(1), \ell(2), \ldots, \ell(n+1)$ is a weakly increasing sequence with $\ell(n+1) - \ell(1) = m$. In other words, it is an increasing sequence of length $n+1$ that takes at most $m+1 < n+1$ values. Thus two consecutive elements in this sequence must be equal; suppose $\ell(i^\star-1)$ and $\ell(i^\star)$ are the last such pair and let $j^\star = \ell(i^\star)$.

We claim that $M_{i^\star,j^\star}$ is a special SW corner of $M$. By construction, it is the leftmost nonzero entry in its row, and the same is true for $M_{i^\star-1, j^\star}$. Also by construction, $M_{i^\star+1, j^\star} = 0$; if this entry were nonzero, we would have $\ell(i^\star+1) \leq \ell(i^\star)$, which implies $\ell(i^\star+1)=\ell(i^\star)$ since this sequence is weakly increasing. But we selected $i^\star$ to be the last possible choice of $i \in \{1,\ldots,n+1\}$ with $\ell(i-1)=\ell(i)$, so this is a contradiction. Given that $M_{i^\star+1, j^\star} = 0$, all other entries below or to the left of $M_{i^\star,j^\star}$ are $0$ as well, since $M$ has convex support. Thus $M_{i^\star,j^\star}$ satisfies all the conditions to be a special southwest corner of $M$.
\end{proof}

\begin{prp}\label{lem:main}
Suppose $M$ is a TN $(n,m)$-periodic infinite matrix with finite diagonal support and a special SW corner. Then the matrix $M'$ obtained from $M$ by eliminating a special SW corner of $M$ via a periodic elementary row operation is also TN.
\end{prp}

\begin{proof}[Proof of Theorem~\ref{thm:main}, assumsing Proposition~\ref{lem:main}]
Let $M$ be an arbitrary TN infinite $(n,m)$-periodic matrix with finite diagonal support. We show, via an argument by cases, that we can always write $M=R \cdot M'$ or $M=M' \cdot R$ so that:
\begin{enumerate}
\item $R$ and $M'$ are totally nonnegative, 
\item the matrix $R$ is representable by a cylindrical network $N_R$, and
\item the matrix $M'$ has strictly fewer nonzero entries than $M$, or is $(n', m')$-periodic for $n'<n$ or $m'<m$.
\end{enumerate}
By induction, $M'$ is also representable by a cylindrical network $N_{M'}$. It follows that $M$ is representable by a network obtained via concatenating $N_R$ and $N_{M'}$. The base case is an $(\ell,\ell)$-periodic matrix supported on a single diagonal. The latter is clearly representable by a cylindrical network $N$ with $\ell$ sources, $\ell$ sinks, and $\ell$ edges.

\textbf{Case I:} Suppose the matrix $M$ has rows or columns of all zeroes. Suppose, in particular, that the $k$th row of $M$ is all zeroes. Let $\mathrm{id}_k$ be the $(n-1,n)$-periodic matrix which is equal to the $(n,n)$-periodic identity matrix with all rows with indices equal to $k \bmod n$ removed. Then observe that $M = \mathrm{id}_k \cdot M'$, where $M'$ is equal to $M$, but with every row with index equal to $k \bmod n$ removed. In particular, $M'$ is $(n-1,m)$-periodic and TN. Since $M$ is periodic, we only need to apply this process finitely many times before we obtain a matrix with no rows of all zeroes. Now observe that $\mathrm{id}_k$ is equal to the weight matrix of the cylindrical network with $n-1$ sources $v_1, v_2, \ldots, v_{k-1}, v_{k+1}, \ldots, v_n$, with $n$ sinks $w_1, w_2, \ldots, w_n$, and with each source $v_i$ incident to a single edge of weight $1$ terminating at $w_i$ for all $i \neq k$.

If $M$ has columns of all zeroes, we apply a similar argument. When column $k$ of $M$ is all zeroes, we may write $M = M' \cdot (\mathrm{id}_k)^{\top}$, and $M'$ is equal to $M$ with the $k$th column removed. Similarly to before, $M'$ is $(n,n-1)$-periodic and TN. The matrix $(\mathrm{id}_k)^{\top}$ is representable by a cylindrical network. This is because, given a network $N$, one may reverse the orientations of all edges and exchange the roles of sources and sinks to obtain a network $N^\top$ satisfying $W(N^\top) = \left(W(N)\right)^\top$.

\textbf{Case II:} Suppose that $n=m$ and $M$ has no rows or columns of all zeroes. Then if $M$ does have a special SW corner, Proposition~\ref{lem:main} implies $M = R \cdot M'$, where $R$ eliminates the special SW corner of $M$ via periodic elementary row operations and $M'$ is still TN. The matrix $M'$ has strictly fewer nonzero entries, and $R$ is representable by Lemma~\ref{lem:elimination-representable}.

On the other hand, if $M$ does not have a special SW corner, it is upper triangular, and therefore invertible. Using this full-rank assumption, Theorem 2.6 of \cite{lam-pylyavskyy} allows us to eliminate a (non-special) SW corner of $M$ via periodic elementary row operations, and then follow reasoning similar to the special corner case.

\textbf{Case III:} Suppose $n \neq m$ and $M$ has no rows or columns of all zeroes. Then if $n > m$, we may apply Lemma~\ref{lem:SW-corner-existence} to find a special SW corner in $M$ and eliminate it with a periodic elementary row operation via Proposition~\ref{lem:main}, then proceed similarly to Case II.

If $n < m$, we apply Lemmas~\ref{lem:SW-corner-existence} and Proposition~\ref{lem:main} instead to $M^\top$, obtaining a factorization $M^\top = R \cdot M'$. Thus $M = (M')^\top \cdot R^\top$. Following the reasoning in Case I, the matrices $(M')^\top$ and $R^\top$ are representable by networks, since $M'$ and $R$ themselves are.
\end{proof}

\subsection{An Example of Proposition~\ref{lem:main}}\label{sec:eg}

Proving Proposition~\ref{lem:main} is the primary requirement for proving Theorem~\ref{thm:main}. The proof will require several new definitions. As such, we dedicate this section to computing an example first, to motivate these definitions. The full proof is contained in the following section.

Suppose $M$ is a $(2,1)$-periodic TN matrix:
    \[M = \begin{bmatrix}
    b_0 & b_1 & b_2 & b_3 & b_4 & \ldots & b_n & 0 & 0 & \ldots \\
    a_0 & a_1 & a_2 & a_3 & a_4 & \ldots & a_n & a_{n+1} & 0 & \ldots \\
    0 & b_0 & b_1 & b_2 & b_3 & \ldots & b_{n-1} & b_n & 0 & \ldots \\
    0 & a_0 & a_1 & a_2 & a_3 & \ldots & a_{n-1} & a_n & 0 & \ldots \\
    \vdots & \vdots & \vdots & \vdots & \vdots & \vdots & \vdots & \vdots & \vdots & \ddots
    \end{bmatrix}.\]
Then we obtain the following matrix $M'$ by eliminating a special SW corner of $M$:
    \[M' = \begin{bmatrix}
    b_0 & b_1 & b_2 & \ldots & b_{n-1} & b_n & 0 & \ldots \\
    a_0-cb_0 & a_1-cb_1 & a_2-cb_2 & \ldots & a_{n-1}-cb_{n-1} & a_n-cb_n & a_{n+1} & \ldots \\
    0 & b_0 & b_1 & \ldots & b_{n-2} & b_{n-1} & b_n & \ldots \\
    0 & a_0-cb_0 & a_1-cb_1 & \ldots & a_{n-2}-cb_{n-2} & a_{n-1}-cb_{n-1} & a_n-cb_n &\ldots \\
    \vdots & \vdots & \vdots & \vdots & \vdots & \vdots & \vdots & \ddots
    \end{bmatrix},\]
where $c = a_0/b_0$. Notice that we have chosen to display $a_0-cb_0$ in $M'$, even though it is equal to $0$. The utility of this will be made clear later.

When proving Proposition~\ref{lem:main}, we must show every submatrix of $M'$ has a nonnegative determinant. Consider the following submatrix of $M'$:
    \[\Delta = \begin{bmatrix}
    a_4-cb_4 & a_5-cb_5 & a_6-cb_6 \\
    b_2 & b_3 & b_4 \\
    a_1-cb_1 & a_2-cb_2 & a_3-cb_3
    \end{bmatrix}.\]
By the multilinearity of the determinant, we have
    \begin{align}
    \left|\Delta\right| &=\left| \begin{matrix}
    a_4-cb_4 & a_5-cb_5 & a_6-cb_6 \\
    b_2 & b_3 & b_4 \\
    a_1-cb_1 & a_2-cb_2 & a_3-cb_3
    \end{matrix} \right| \nonumber \\ 
    &=
    \left|
    \begin{matrix}
    a_4 & a_5 & a_6 \\
    b_2 & b_3 & b_4 \\
    a_1-cb_1 & a_2-cb_2 & a_3-cb_3
    \end{matrix} \right|
    - c \cdot \left|
    \begin{matrix}
    b_4 & b_5 & b_6 \\
    b_2 & b_3 & b_4 \\
    a_1-cb_1 & a_2-cb_2 & a_3-cb_3
    \end{matrix} \right| \nonumber \\ 
    &=
    \left|
    \begin{matrix}
    a_4 & a_5 & a_6 \\
    b_2 & b_3 & b_4 \\
    a_1 & a_2 & a_3
    \end{matrix} \right|
    - c \cdot \left|
    \begin{matrix}
    a_4 & a_5 & a_6 \\
    b_2 & b_3 & b_4 \\
    b_1 & b_2 & b_3
    \end{matrix} \right|
    - c \cdot \left|
    \begin{matrix}
    b_4 & b_5 & b_6 \\
    b_2 & b_3 & b_4 \\
    a_1 & a_2 & a_3
    \end{matrix} \right|
    + c^2 \cdot \left|
    \begin{matrix}
    b_4 & b_5 & b_6 \\
    b_2 & b_3 & b_4 \\
    b_1 & b_2 & b_3
    \end{matrix} \right|.
    \label{eq:example-multilin-expansion}\end{align}
Recalling that $c = a_0/b_0$, we may re-write the above as
    \begin{align}
    \left|
    \begin{matrix}
    a_4 & a_5 & a_6 \\
    b_2 & b_3 & b_4 \\
    a_1 & a_2 & a_3
    \end{matrix} \right|
    - \frac{a_0}{b_0} \cdot \left|
    \begin{matrix}
    a_4 & a_5 & a_6 \\
    b_2 & b_3 & b_4 \\
    b_1 & b_2 & b_3
    \end{matrix} \right|
    - \frac{a_0}{b_0} \cdot \left|
    \begin{matrix}
    b_4 & b_5 & b_6 \\
    b_2 & b_3 & b_4 \\
    a_1 & a_2 & a_3
    \end{matrix} \right|
    + \frac{a_0^2}{b_0^2} \cdot \left|
    \begin{matrix}
    b_4 & b_5 & b_6 \\
    b_2 & b_3 & b_4 \\
    b_1 & b_2 & b_3
    \end{matrix} \right|.
    \label{eq:example-with-denoms} \end{align}
Let us clear denominators in Expression~\ref{eq:example-with-denoms}, multiplying by $b_0^2 > 0$ to obtain
    \begin{align}
    b_0^2 \left|
    \begin{matrix}
    a_4 & a_5 & a_6 \\
    b_2 & b_3 & b_4 \\
    a_1 & a_2 & a_3
    \end{matrix} \right|
    - b_0a_0 \cdot \left|
    \begin{matrix}
    a_4 & a_5 & a_6 \\
    b_2 & b_3 & b_4 \\
    b_1 & b_2 & b_3
    \end{matrix} \right|
    - b_0a_0 \cdot \left|
    \begin{matrix}
    b_4 & b_5 & b_6 \\
    b_2 & b_3 & b_4 \\
    a_1 & a_2 & a_3
    \end{matrix} \right|
    + a_0^2 \cdot \left|
    \begin{matrix}
    b_4 & b_5 & b_6 \\
    b_2 & b_3 & b_4 \\
    b_1 & b_2 & b_3
    \end{matrix} \right|.
    \label{eq:example-cleared}
    \end{align}

Now, consider the following submatrix of $M$:
    \[\tilde{\Delta} = \begin{bmatrix}
    b_0 & b_3 & b_4 & b_5 & b_6 \\
    a_0 & a_3 & a_4 & a_5 & a_6 \\
    0 & b_1 & b_2 & b_3 & b_4 \\
    0 & b_0 & b_1 & b_2 & b_3 \\
    0 & a_0 & a_1 & a_2 & a_3
    \end{bmatrix}.\]
Since $M$ is totally nonnegative, so is $\tilde{\Delta}$. Observe that Expression~\ref{eq:example-cleared} is equal to
    \begin{align}
    \left|\tilde{\Delta}[14,12]\right| \cdot \left|\tilde{\Delta}[235,345]\right|
    -&\left|\tilde{\Delta}[15,12]\right| \cdot \left|\tilde{\Delta}[234,345]\right| \label{eq:signed-comps} \\
    &-\left|\tilde{\Delta}[24,12]\right| \cdot \left|\tilde{\Delta}[135,345]\right|
    +\left|\tilde{\Delta}[25,12]\right| \cdot \left|\tilde{\Delta}[134,345]\right|.
    \nonumber
    \end{align}
In other words, each of the summands in Expression~\ref{eq:example-cleared} is (up to a sign) a complementary minor immanant of $\tilde{\Delta}$. Via Definition~\ref{dfn:comp-minor-diagram}, we re-write Expression~\ref{eq:signed-comps} as
    \begin{align}
    \begin{tikzpicture}[baseline,x=.75cm, y=.5cm]
\begin{scope}[shift={(0,2.1)}]
\draw[circle, fill=black](0, 0) circle[radius = 1mm] node {};
\draw[circle, fill=black](0, -1) circle[radius = 1mm] node {};
\draw[circle, fill=white](0, -2) circle[radius = 1mm] node {};
\draw[circle, fill=white](0, -3) circle[radius = 1mm] node {};
\draw[circle, fill=white](0, -4) circle[radius = 1mm] node {};
\draw[circle, fill=white](1, 0) circle[radius = 1mm] node {};
\draw[circle, fill=black](1, -1) circle[radius = 1mm] node {};
\draw[circle, fill=black](1, -2) circle[radius = 1mm] node {};
\draw[circle, fill=white](1, -3) circle[radius = 1mm] node {};
\draw[circle, fill=black](1, -4) circle[radius = 1mm] node {};
\end{scope}
\end{tikzpicture}
\quad - \quad
\begin{tikzpicture}[baseline,x=.75cm, y=.5cm]
\begin{scope}[shift={(0,2.1)}]
\draw[circle, fill=black](0, 0) circle[radius = 1mm] node {};
\draw[circle, fill=black](0, -1) circle[radius = 1mm] node {};
\draw[circle, fill=white](0, -2) circle[radius = 1mm] node {};
\draw[circle, fill=white](0, -3) circle[radius = 1mm] node {};
\draw[circle, fill=white](0, -4) circle[radius = 1mm] node {};
\draw[circle, fill=white](1, 0) circle[radius = 1mm] node {};
\draw[circle, fill=black](1, -1) circle[radius = 1mm] node {};
\draw[circle, fill=black](1, -2) circle[radius = 1mm] node {};
\draw[circle, fill=black](1, -3) circle[radius = 1mm] node {};
\draw[circle, fill=white](1, -4) circle[radius = 1mm] node {};
\end{scope}
\end{tikzpicture}
\quad - \quad
\begin{tikzpicture}[baseline,x=.75cm, y=.5cm]
\begin{scope}[shift={(0,2.1)}]
\draw[circle, fill=black](0, 0) circle[radius = 1mm] node {};
\draw[circle, fill=black](0, -1) circle[radius = 1mm] node {};
\draw[circle, fill=white](0, -2) circle[radius = 1mm] node {};
\draw[circle, fill=white](0, -3) circle[radius = 1mm] node {};
\draw[circle, fill=white](0, -4) circle[radius = 1mm] node {};
\draw[circle, fill=black](1, 0) circle[radius = 1mm] node {};
\draw[circle, fill=white](1, -1) circle[radius = 1mm] node {};
\draw[circle, fill=black](1, -2) circle[radius = 1mm] node {};
\draw[circle, fill=white](1, -3) circle[radius = 1mm] node {};
\draw[circle, fill=black](1, -4) circle[radius = 1mm] node {};
\end{scope}
\end{tikzpicture}
\quad + \quad
\begin{tikzpicture}[baseline,x=.75cm, y=.5cm]
\begin{scope}[shift={(0,2.1)}]
\draw[circle, fill=black](0, 0) circle[radius = 1mm] node {};
\draw[circle, fill=black](0, -1) circle[radius = 1mm] node {};
\draw[circle, fill=white](0, -2) circle[radius = 1mm] node {};
\draw[circle, fill=white](0, -3) circle[radius = 1mm] node {};
\draw[circle, fill=white](0, -4) circle[radius = 1mm] node {};
\draw[circle, fill=black](1, 0) circle[radius = 1mm] node {};
\draw[circle, fill=white](1, -1) circle[radius = 1mm] node {};
\draw[circle, fill=black](1, -2) circle[radius = 1mm] node {};
\draw[circle, fill=black](1, -3) circle[radius = 1mm] node {};
\draw[circle, fill=white](1, -4) circle[radius = 1mm] node {};
\end{scope}
\end{tikzpicture}. \label{eq:main-eg-tls}
    \end{align}
Now apply Lemma~\ref{lem:tl-compat} to each complementary minor diagram above to obtain a signed sum of Temperley-Lieb immanants of $\tilde{\Delta}$:
    \input{Tikz-Pictures/expanded-tl-example}
Keeping in mind that the coloring does not affect the value of a Temperley-Lieb immanant, many of the terms in the above sum will cancel. After cancellation, we have
    \begin{align}\begin{tikzpicture}[baseline,x=.75cm, y=.5cm]
    \begin{scope}[shift={(0,3)}]
    \draw (0, -1) to (1, -1);
    \draw[circle, fill=black](0, -1) circle[radius = 1mm] node {};
    \draw[circle, fill=white](1, -1) circle[radius = 1mm] node {};
    \draw (0, -2) to [bend left=45](0, -3);
    \draw[circle, fill=black](0, -2) circle[radius = 1mm] node {};
    \draw[circle, fill=white](0, -3) circle[radius = 1mm] node {};
    \draw (1, -2) to (0, -4);
    \draw[circle, fill=white](0, -4) circle[radius = 1mm] node {};
    \draw[circle, fill=black](1, -2) circle[radius = 1mm] node {};
    \draw (0, -5) to (1, -5);
    \draw[circle, fill=white](0, -5) circle[radius = 1mm] node {};
    \draw[circle, fill=black](1, -5) circle[radius = 1mm] node {};
    \draw (1, -3) to [bend right=45](1, -4);
    \draw[circle, fill=white](1, -4) circle[radius = 1mm] node {};
    \draw[circle, fill=black](1, -3) circle[radius = 1mm] node {};
    \end{scope}
    \end{tikzpicture}
    \quad - \quad
    \begin{tikzpicture}[baseline,x=.75cm, y=.5cm]
    \begin{scope}[shift={(0,3)}]
    \draw (0, -1) to [bend left=45](0, -4);
    \draw[circle, fill=black](0, -1) circle[radius = 1mm] node {};
    \draw[circle, fill=white](0, -4) circle[radius = 1mm] node {};
    \draw (0, -2) to [bend left=45](0, -3);
    \draw[circle, fill=black](0, -2) circle[radius = 1mm] node {};
    \draw[circle, fill=white](0, -3) circle[radius = 1mm] node {};
    \draw (0, -5) to (1, -5);
    \draw[circle, fill=white](0, -5) circle[radius = 1mm] node {};
    \draw[circle, fill=black](1, -5) circle[radius = 1mm] node {};
    \draw (1, -1) to [bend right=45](1, -4);
    \draw[circle, fill=white](1, -4) circle[radius = 1mm] node {};
    \draw[circle, fill=black](1, -1) circle[radius = 1mm] node {};
    \draw (1, -2) to [bend right=45](1, -3);
    \draw[circle, fill=black](1, -3) circle[radius = 1mm] node {};
    \draw[circle, fill=white](1, -2) circle[radius = 1mm] node {};
    \end{scope}
    \end{tikzpicture}
    \quad - \quad
    \begin{tikzpicture}[baseline,x=.75cm, y=.5cm]
    \begin{scope}[shift={(0,3)}]
    \draw (0, -1) to [bend left=45](0, -4);
    \draw[circle, fill=black](0, -1) circle[radius = 1mm] node {};
    \draw[circle, fill=white](0, -4) circle[radius = 1mm] node {};
    \draw (0, -2) to [bend left=45](0, -3);
    \draw[circle, fill=black](0, -2) circle[radius = 1mm] node {};
    \draw[circle, fill=white](0, -3) circle[radius = 1mm] node {};
    \draw (1, -1) to (0, -5);
    \draw[circle, fill=white](0, -5) circle[radius = 1mm] node {};
    \draw[circle, fill=black](1, -1) circle[radius = 1mm] node {};
    \draw (1, -2) to [bend right=45](1, -5);
    \draw[circle, fill=black](1, -5) circle[radius = 1mm] node {};
    \draw[circle, fill=white](1, -2) circle[radius = 1mm] node {};
    \draw (1, -3) to [bend right=45](1, -4);
    \draw[circle, fill=white](1, -4) circle[radius = 1mm] node {};
    \draw[circle, fill=black](1, -3) circle[radius = 1mm] node {};
    \end{scope}
    \end{tikzpicture}.\label{eq:remaining-terms}\end{align}
Our goal is to show that this sum is nonnegative. By Theorem~\ref{lem:tl-pos}, each Temperley-Lieb immanant is nonnegative individually, but two of them appear here with a minus sign. Notice, however, that the middle term is also  $(34,12)$-compatible, and
    \[\tilde{\Delta}[34, 12] \cdot \tilde{\Delta}[125,345] = 
    \left|\begin{matrix}
    0 & b_1 \\
    0 & b_0
    \end{matrix}\right| \cdot 
    \left|\begin{matrix}
    b_4 & b_5 & b_6 \\
    a_4 & a_5 & a_6 \\
    a_1 & a_2 & a_3
    \end{matrix}\right| = 0.\]
Thus, by Corollary~\ref{lem:tl-zero}, the middle Temperley-Lieb immanant in Expression~\ref{eq:remaining-terms} is $0$. A similar argument shows that the third term in Expression~\ref{eq:remaining-terms} is also $0$. The only remaining Temperley-Lieb immanant has a positive coefficient, so $b_0^2|\Delta| \geq 0$, and therefore $\left|\Delta\right| \geq 0$.

\subsection{The Proof of Proposition~\ref{lem:main}}\label{sec:proof-of-main-lemma}

Our goal in this section is to describe general versions of the calculations found in the previous section. For the remainder of this section, we let $M$ be some fixed TN $(n,m)$-periodic infinite matrix and let $M'$ be obtained from $M$ by eliminating a special SW corner at entry $(i_\star, j_\star)$. We also fix $a_0 = M_{i^\star, j^\star}$ and $b_0 =  M_{i^\star-1, j^\star}$, so that $c = a_0 / b_0$ is the constant used in the periodic elementary row operation achieving this elimination. Label the nonzero entries in row $i^\star$ as $a_0, a_1, a_2, \ldots$ and label the nonzero entries in row $i^\star-1$ as $b_0, b_1, b_2, \ldots$. 

Given a submatrix $\Delta$ of $M'$, we build a generalized submatrix $\tilde{\Delta}$ of $M$. We then show that $|\Delta|$ is a nonnegative sum of Temperley-Lieb immanants of $\tilde{\Delta}$. Since $M$ is assumed to be TN, we know that $\tilde{\Delta}$ is TN as well, so its Temperley-Lieb immanants are nonnegative. The matrix $\tilde{\Delta}$ is built specifically to reflect the expansion of $|\Delta|$ via multilinearity, as in Expressions~\ref{eq:example-multilin-expansion}~and~\ref{eq:signed-comps}.

\begin{dfn}
For a set $X \subseteq \mathbb{N}$, let $X^- = \{x - 1 : x \in X\}$.
\end{dfn}

\begin{dfn}\label{dfn:delta-tilde}
Given a submatrix $\Delta = M'\left[I,J\right]$, let
\[I^\star = I \cap (\mathbb{N}i^\star), \quad A(\Delta) = \{r \in I^\star :r-1 \not\in I\}, \quad C(\Delta) = \left\{j^\star + km:i^\star + kn \in A(\Delta)\right\}.\] Then define 
\[\tilde{\Delta} = M[I \cup A(\Delta)^-, J \sqcup C(\Delta)],\] 
where $\sqcup$ is a disjoint union. Thus, in general, $J \sqcup C(\Delta)$ may contain duplicate elements, and as such, $\tilde{\Delta}$ is a generalized subamtrix of $M$.
\end{dfn}

Intuitively, $A(\Delta)$ is the set of rows of $\Delta$ which need to be `dealt with' when proving that $\Delta$ is TN. If $r \not \in I^\star$, then $r$ is already equal to a row of $M$. Additionally, if $r \in I^\star$ and $r-1 \in I$, then we can undo the row operation within $\Delta$ while preserving $|\Delta|$.
The purpose of the columns $C(\Delta)$ is to provide additional copies of $a_0$ and $b_0$ in the matrix $\tilde{\Delta}$. These will be used to represent clearing denominators after expanding $|\Delta|$ via multilinearity; see Expressions~\ref{eq:example-multilin-expansion}~and~\ref{eq:example-cleared}.

\begin{ex}\label{eg:largereg-tilde}
Suppose $M$ is a $(4,2)$-periodic TN matrix with a special SW corner at $(4,1)$. Then we have
\[M = \begin{bmatrix}
f_0 & f_1 & f_2 & f_3 & \ldots \\
d_0 & d_1 & d_2 & d_3 & \ldots \\
b_0 & b_1 & b_2 & b_3 & \ldots \\
a_0 & a_1 & a_2 & a_3 & \ldots \\
0 & 0 & f_0 & f_1 & \ldots \\
0 & 0 & d_0 & d_1 & \ldots \\
0 & 0 & b_0 & b_1 & \ldots \\
0 & 0 & a_0 & b_1 & \ldots \\
\vdots & \vdots & \vdots & \vdots & \ddots \\
\end{bmatrix},
\quad
M' = \begin{bmatrix}
f_0 & f_1 & f_2 & f_3 & \ldots \\
d_0 & d_1 & d_2 & d_3 & \ldots \\
b_0 & b_1 & b_2 & b_3 & \ldots \\
a_0-cb_0 & a_1-cb_1 & a_2-cb_2 & a_3-cb_3 & \ldots \\
0 & 0 & f_0 & f_1 & \ldots \\
0 & 0 & d_0 & d_1 & \ldots \\
0 & 0 & b_0 & b_1 & \ldots \\
0 & 0 & a_0-cb_0 & a_1-cb_1 & \ldots \\
\vdots & \vdots & \vdots & \vdots & \ddots \\
\end{bmatrix}.\]

\newpage

Suppose

\[
\Delta = M'[\{4,6,8,12\}, \{2,3,4,5\}] = 
\begin{blockarray}{ccccc}
    & 2 & 3 & 4 & 5 \\
    \begin{block}{c[cccc]}
4 & a_1-cb_1 & a_2-cb_2 & a_3-cb_3 & a_4-cb_4 \\
6 & 0 & d_0 & d_1 & d_2 \\
8 & 0 & a_0-cb_0 & a_1-cb_1 & a_2-cb_2 \\
12 & 0 & 0 & 0 & a_0-cb_0 \\
    \end{block}
\end{blockarray}.
\]

Then $A(\Delta) = \{4,8,12\}$ and $C(\Delta) = \{1,3,5\}$, so

\begin{align*}
\tilde{\Delta} = M[\{3,4,6,7,8,11,12\}, \{1,2,3,3,4,5,5\}] 
= \begin{blockarray}{cccccccc}
& 1 & 2 & 3 & 3 & 4 & 5 & 5 \\
    \begin{block}{c[ccccccc]}
3 & b_0 & b_1 & b_2 & b_2 & b_3 & b_4 & b_4 \\
4 & a_0 & a_1 & a_2 & a_2 & a_3 & a_4 & a_4 \\
6 & 0 & 0 & d_0 & d_0 & d_1 & d_2 & d_2 \\
7 & 0 & 0 & b_0 & b_0 & b_1 & b_2 & b_2 \\
8 & 0 & 0 & a_0 & a_0 & a_1 & a_2 & a_2 \\
11 & 0 & 0 & 0 & 0 & 0 & b_0 & b_0 \\
12 & 0 & 0 & 0 & 0 & 0 & a_0 & a_0 \\
    \end{block}
\end{blockarray}.
\end{align*}

\end{ex}

\begin{dfn}\label{dfn:delta-s}
Given a submatrix $\Delta = M'\left[I,J\right]$ and $S \subseteq A(\Delta)$, define \[\Delta^S = M[(I \setminus S) \cup S^-, \,\, J].\]
\end{dfn}

Intuitively, the matrix $\Delta^S$ is a `specialized' version of $\Delta$. Each row $r \in A(\Delta)$ has been changed out for the corresponding row of either all $a_i$'s or all $b_i$'s. If $r \in S$, we place the row of all $b_i$s; otherwise, we place the row of all $a_i$s.

\begin{ex}\label{eg:largereg-delta-S}
Continuing with the matrices from the Example~\ref{eg:largereg-tilde}, we have
\[
\Delta^{\emptyset} = \begin{bmatrix}
a_1 & a_2 & a_3 & a_4 \\
0 & d_0 & d_1 & d_2 \\
0 & a_0 & a_1 & a_2 \\
0 & 0 & 0 & a_0 \\
\end{bmatrix},\,\,
\Delta^{\{8\}} = \begin{bmatrix}
a_1 & a_2 & a_3 & a_4 \\
0 & d_0 & d_1 & d_2 \\
0 & b_0 & b_1 & b_2 \\
0 & 0 & 0 & a_0 \\
\end{bmatrix},\,\,
\Delta^{\{4,12\}} = \begin{bmatrix}
b_1 & b_2 & b_3 & b_4 \\
0 & d_0 & d_1 & d_2 \\
0 & a_0 & a_1 & a_2 \\
0 & 0 & 0 & b_0 \\
\end{bmatrix}.
\]
\end{ex}

\begin{lem}\label{lem:multilin-expansion}
For any submatrix $\Delta$ of $M'$, we have $\left|\Delta\right| = \sum_{S \subseteq A(\Delta)} (-c)^{|S|} \left|\Delta^S\right|.$
\end{lem}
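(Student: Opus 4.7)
The plan is to apply multilinearity of the determinant to the rows of $\Delta$ indexed by $I_0(\Delta)$, and then identify which of the resulting summands survive. The starting observation is that $H'$ is obtained from $H$ by the elementary row operation of replacing each even row $r$ with (row $r$ of $H$) $- c \cdot$ (row $r-1$ of $H$): for $r = 2k$, row $r$ of $H'$ has entries $a_{j-k} - c\, b_{j-k}$, which is precisely row $r$ of $H$ minus $c$ times row $r-1$ of $H$. Thus the row of $\Delta = H'_{I, J}$ at position $r \in I_0(\Delta)$ equals $H_{\{r\}, J} - c\, H_{\{r-1\}, J}$, while the row at position $r \in I_1(\Delta)$ coincides with $H_{\{r\}, J}$.

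By multilinearity,
\[|\Delta| = \sum_{S \subseteq I_0(\Delta)} (-c)^{\#S} \, |M^S|,\]
where $M^S$ is the matrix whose row at position $r$ is $H_{\{r-1\}, J}$ if $r \in S$ and $H_{\{r\}, J}$ otherwise. The next step would be to show that $|M^S| = 0$ whenever $S \not\subseteq A(\Delta)$. Indeed, such an $S$ must contain some $r \in B(\Delta) \cap I_0(\Delta)$, and by the definition of $B(\Delta)$ this forces $r - 1 \in I_1(\Delta) \subseteq I$. The row of $M^S$ at position $r$ is $H_{\{r-1\}, J}$ (because $r \in S$), and the row at position $r - 1$ is also $H_{\{r-1\}, J}$ (because $r - 1 \in I_1(\Delta)$, whose rows are unchanged by the expansion). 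Two identical rows force the determinant to vanish.

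It remains to identify $M^S$ with $\Delta^S$ for $S \subseteq A(\Delta)$. Since $r \in A(\Delta)$ forces $r - 1 \notin I$, and since $r - 1 < r$ lies strictly above every element of $I$ smaller than $r$, the operation of replacing $r$ by $r - 1$ in the ordered list $I$ leaves the slot structure intact and introduces no row permutation. Hence
\[M^S \;=\; H_{(I \setminus S) \cup S^-, \, J} \;=\; H_{S^- \cup (A(\Delta) \setminus S) \cup B(\Delta), \, J},\]
which by Definitions~\ref{def:tilde} and~\ref{def:delta-s} is exactly $\tilde{\Delta}_{S^- \cup (A(\Delta) \setminus S) \cup B(\Delta), \, J} = \Delta^S$. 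Combining the nonvanishing summands gives the desired identity.

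The only real bookkeeping hurdles are the vanishing argument for $S \not\subseteq A(\Delta)$ and the order-preservation check in the last step; both reduce directly to the definitions of $A(\Delta)$ and $B(\Delta)$, so I expect no serious obstacle beyond careful bookkeeping.
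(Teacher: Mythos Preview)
Your proposal is correct and follows essentially the same argument as the paper: expand $|\Delta|$ by multilinearity along each even-indexed row (those with entries $a_i - cb_i$), observe that whenever such a row lies in $B(\Delta)$ the $b$-specialization duplicates an existing row and vanishes, and identify the surviving summands with $|\Delta^S|$ for $S \subseteq A(\Delta)$. The paper's proof is slightly terser but makes the identical moves in the same order, including the duplicate-row vanishing and the final identification via Definition~\ref{def:delta-s}.
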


\begin{proof}
When expanding $|\Delta|$ via multilinearity along the rows in $A(\Delta)$ as in Equation~\ref{eq:example-multilin-expansion}, we must choose either the $a_i$ or $b_i$ specialization of that row. Each time we select $b_i$ this way, we introduce a factor of $-c$. Thus $(-c)^{|S|} \left|\Delta^S\right|$ is a term that appears in the multilinear expansion of $|\Delta|$, and summing over all possible choices of $S \subset A(\Delta)$, we obtain the statement of the lemma.
\end{proof}

\begin{lem}\label{lem:multilin-expansion-is-tilde-cm}
For any submatrix $\Delta$ of $M$ and any $S \subset A(\Delta)$, we have
\begin{align}
b_0^{|A(\Delta)|} (-c)^{|S|} = a_0^{|S|} b_0^{|A(\Delta)|-|S|} = \left|\tilde{\Delta}[S \cup (A(\Delta) \setminus S)^-, \,\, C(\Delta)]\right|. \label{eq:diagonal}
\end{align}
In particular, $b_0^{|A(\Delta)|} (-c)^{|S|} \left|\Delta^S\right|$ is a complimentary minor immanant of $\tilde{\Delta}$.
\end{lem}

\begin{proof}
First, we defend the claim about complimentary minor immanants. Recall from Definition~\ref{dfn:delta-s} that $\Delta^S = M[(I \setminus S) \cup S^-, \,\, J]$. Then note that
\[(I \setminus S) \cup S^- \quad \text{and} \quad S \cup (A(\Delta) \setminus S)^-\] 
partition the set $I \cup A(\Delta)^-$, which is precisely the row set of $\tilde{\Delta}$ by Definition~\ref{dfn:delta-tilde}. Similarly, $C(\Delta)$ and $J$ partition the column set of $\tilde{\Delta}$. Thus, once we prove Equation~\ref{eq:diagonal} above, Lemma~\ref{lem:multilin-expansion} will imply that $|\Delta|$ is equal to $b_0^{|A(\Delta)|}$ times a signed sum of complimentary minor immanants of $\tilde{\Delta}$.

For Equation~\ref{eq:diagonal}, we claim that $\tilde{\Delta}[S \cup (A(\Delta) \setminus S)^-, \,\, C(\Delta)]$ is an upper triangular matrix whose main diagonal contains $|S|$ copies of $a_0$ and $|A(\Delta)|-|S|$ copies of $b_0$. The claim about the value of its determinant will follow immediately. By Definition~\ref{dfn:delta-tilde}, the set $C(\Delta)$ contains only indices which are equal to $j^\star$ modulo $m$. Similarly, the elements of $S \cup (A(\Delta) \setminus S)^-$ are all either equal to $i^\star$ or $i^\star-1$ modulo $n$. Since $M_{i^\star, j^\star} = a_0$ and $M_{i^\star-1, j^\star} = b_0$, it follows that the diagonal entries of $\tilde{\Delta}[S \cup (A(\Delta) \setminus S)^-, \,\, C(\Delta)]$ are all either $a_0$ or $b_0$. In addition, since $(i^\star, j^\star)$ is a special SW corner of $M$, all entries below and to the left of the diagonal must be $0$. The only possible exception would be an $a_0$ immediately below a $b_0$. By construction, however, if $r-1 \in (A(\Delta) \setminus S)^-$, then $r$ is in $A(\Delta) \setminus S \subseteq (I \setminus S)$, the complementary row set. This completes the proof.
\end{proof}

\begin{ex}
We continue from Example~\ref{eg:largereg-tilde}. If $S=\emptyset$, then 
\[S \cup (A(\Delta) \setminus S)^- = A(\Delta)^- = \{3,7,11\} \quad \text{and} \quad C(\Delta) = \{1,3,5\}.\]
We have
\begin{align*}
\tilde{\Delta}[\{3,7,11\},\{1,3,5\}] = 
\begin{blockarray}{cccccccc}
& \circled{1} & 2 & \circled{3} & 3 & 4 & \circled{5} & 5 \\
\begin{block}{c[ccccccc]}
\circled{3} & b_0 & b_1 & b_2 & b_2 & b_3 & b_4 & b_4 \\
4 & a_0 & a_1 & a_2 & a_2 & a_3 & a_4 & a_4 \\
6 & 0 & 0 & d_0 & d_0 & d_1 & d_2 & d_2 \\
\circled{7} & 0 & 0 & b_0 & b_0 & b_1 & b_2 & b_2 \\
8 & 0 & 0 & a_0 & a_0 & a_1 & a_2 & a_2 \\
\circled{11} & 0 & 0 & 0 & 0 & 0 & b_0 & b_0 \\
12 & 0 & 0 & 0 & 0 & 0 & a_0 & a_0 \\
\end{block}
\end{blockarray} = \begin{bmatrix}
    b_0 & b_2 & b_4 \\
    0 & b_0 & b_2 \\
    0 & 0 & b_0 \\
    \end{bmatrix}.
\end{align*}
Clearly, the determinant of the above is $a_0^{|S|} b_0^{|A(\Delta)|-|S|}=b_0^3$. The complementary minor is 
\begin{align*}
\tilde{\Delta}[\{4,6,8,12\},\{2,3,4,5\}] &=
\begin{blockarray}{cccccccc}
& 1 & \circled{2} & 3 & \circled{3} & \circled{4} & 5 & \circled{5} \\
\begin{block}{c[ccccccc]}
3 & b_0 & b_1 & b_2 & b_2 & b_3 & b_4 & b_4 \\
\circled{4} & a_0 & a_1 & a_2 & a_2 & a_3 & a_4 & a_4 \\
\circled{6} & 0 & 0 & d_0 & d_0 & d_1 & d_2 & d_2 \\
7 & 0 & 0 & b_0 & b_0 & b_1 & b_2 & b_2 \\
\circled{8} & 0 & 0 & a_0 & a_0 & a_1 & a_2 & a_2 \\
11 & 0 & 0 & 0 & 0 & 0 & b_0 & b_0 \\
\circled{12} & 0 & 0 & 0 & 0 & 0 & a_0 & a_0 \\
\end{block}
\end{blockarray}
= \begin{bmatrix}
    a_1 & a_2 & a_3 & a_4 \\
    0 & d_0 & d_1 & d_2 \\
    0 & a_0 & a_1 & a_2 \\
    0 & 0 & 0 & a_0 \\
    \end{bmatrix} = \Delta^\emptyset.
\end{align*}
\end{ex}

\begin{dfn}\label{dfn:dcmd}
A \textit{decorated complementary minor diagram} is a complementary minor diagram with \textit{gray blocks} inserted, which satisfy the relation
    \[\begin{tikzpicture}[baseline,x=.75cm, y=.5cm]
    \begin{scope}[shift={(0,.55)}]
    \draw[circle, fill=gray] (0, 0) circle[radius = 1mm] node (D1) {};
    \draw[circle, fill=gray] (0, -1) circle[radius = 1mm] node (D2) {};
    \node[draw, dotted, fit=(D1) (D2), inner sep=0mm] {};
    \end{scope}
    \end{tikzpicture}
    =
    \begin{tikzpicture}[baseline,x=.75cm, y=.5cm]
    \begin{scope}[shift={(0,.55)}]
    \draw[circle, fill=white] (0, 0) circle[radius = 1mm] node (D1) {};
    \draw[circle, fill=black] (0, -1) circle[radius = 1mm] node (D2) {};
    \node[draw, dotted, fit=(D1) (D2), inner sep=0mm] {};
    \end{scope}
    \end{tikzpicture}
    -
    \begin{tikzpicture}[baseline,x=.75cm, y=.5cm]
    \begin{scope}[shift={(0,.55)}]
    \draw[circle, fill=black] (0, 0) circle[radius = 1mm] node (D1) {};
    \draw[circle, fill=white] (0, -1) circle[radius = 1mm] node (D2) {};
    \node[draw, dotted, fit=(D1) (D2), inner sep=0mm] {};
    \end{scope}
    \end{tikzpicture}\]
inside a larger complementary minor diagram. Given a decorated complementary minor diagram $\mathcal{D}$, we define $\mathrm{GB}(\mathcal{D})$ to be its set of gray blocks.
\end{dfn}

\begin{ex}\label{rmk:gb} Let us apply the relation in Definition~\ref{dfn:dcmd} to a decorated complementary minor diagram with two gray blocks:
    \input{Tikz-Pictures/gb-eg}
Each decorated complementary minor diagram is a signed sum of ordinary complementary minor diagrams. As such, if $\mathcal{D}$ is a decorated complementary minor diagram, we will write $\mathcal{D}(M)$ to mean the evaluation of the corresponding linear combination complementary minor immanants on the matrix $M$.
\end{ex}

\begin{dfn}\label{dfn:sdcmd}
If $\mathcal{D}$ is a decorated complementary minor diagram, for any $S \subseteq \mathrm{GB}(\mathcal{D})$, define $\mathcal{D}^S$ to be the (undecorated) complementary minor diagram wherein each gray block in $S$ is replaced with
    \begin{tikzpicture}[baseline,x=.75cm, y=.5cm]
    \begin{scope}[shift={(0,.55)}]
    \draw[circle, fill=black] (0, 0) circle[radius = 1mm] node (D1) {};
    \draw[circle, fill=white] (0, -1) circle[radius = 1mm] node (D2) {};
    \node[draw, dotted, fit=(D1) (D2), inner sep=0mm] {};
    \end{scope}
    \end{tikzpicture}
and each gray block not in $S$ is replaced with
    \begin{tikzpicture}[baseline,x=.75cm, y=.5cm]
    \begin{scope}[shift={(0,.55)}]
    \draw[circle, fill=white] (0, 0) circle[radius = 1mm] node (D1) {};
    \draw[circle, fill=black] (0, -1) circle[radius = 1mm] node (D2) {};
    \node[draw, dotted, fit=(D1) (D2), inner sep=0mm] {};
    \end{scope}
    \end{tikzpicture}.
\end{dfn}
    
\begin{cor}\label{cor:dcmd-sum}
By repeatedly applying the relation in Definition~\ref{dfn:dcmd}, we have
    \[\mathcal{D} = \sum_{S \subseteq \mathrm{GB}(\mathcal{D})} (-1)^{|S|} \mathcal{D}^S.\]
\end{cor}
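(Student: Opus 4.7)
The plan is to argue by induction on $\#\mathrm{GB}(\mathcal{D})$, iterating the local relation from Definition~\ref{def:dcmd} one gray block at a time. The base case $\#\mathrm{GB}(\mathcal{D}) = 0$ is immediate: with no gray blocks, $\mathcal{D}$ is already an undecorated complementary minor diagram, so $\mathcal{D} = \mathcal{D}^\emptyset$ and the right-hand side reduces to the single term $(-1)^0 \mathcal{D}^\emptyset$.

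For the inductive step, I would fix any gray block $g \in \mathrm{GB}(\mathcal{D})$ and let $\mathcal{D}_+$ and $\mathcal{D}_-$ denote the decorated diagrams obtained from $\mathcal{D}$ by replacing $g$ with, respectively, a (white top, black bottom) pair and a (black top, white bottom) pair. Definition~\ref{def:dcmd} gives $\mathcal{D} = \mathcal{D}_+ - \mathcal{D}_-$, and both $\mathcal{D}_+$ and $\mathcal{D}_-$ have gray block set $\mathrm{GB}(\mathcal{D}) \setminus \{g\}$, of size one less than $\#\mathrm{GB}(\mathcal{D})$. Applying the inductive hypothesis to each yields
\begin{align*}
\mathcal{D} \;=\; \sum_{S' \subseteq \mathrm{GB}(\mathcal{D}) \setminus \{g\}} (-1)^{\#S'} \, \mathcal{D}_+^{S'} \;\;-\;\; \sum_{S' \subseteq \mathrm{GB}(\mathcal{D}) \setminus \{g\}} (-1)^{\#S'} \, \mathcal{D}_-^{S'}.
\end{align*}

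Comparing with Definition~\ref{def:sdcmd}, I would observe that $\mathcal{D}_+^{S'} = \mathcal{D}^{S'}$ (since $g$ is already forced into its ``$g \notin S$'' configuration) while $\mathcal{D}_-^{S'} = \mathcal{D}^{S' \cup \{g\}}$. Rewriting the minus sign on the second sum as $-(-1)^{\#S'} = (-1)^{\#(S' \cup \{g\})}$, the two sums merge into a single sum over all subsets $S \subseteq \mathrm{GB}(\mathcal{D})$, partitioned according to whether or not $g \in S$. This is exactly the claimed identity. No genuine obstacle arises: each gray block expands independently of the others, and the only care required is the bookkeeping that matches the sign convention of Definition~\ref{def:dcmd} with the convention of Definition~\ref{def:sdcmd} for which configurations correspond to $S$ versus its complement.
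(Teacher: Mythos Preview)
Your proof is correct and is exactly the formalization of what the paper intends: the paper states the corollary without proof, simply asserting that it follows ``by repeatedly applying the relation in Definition~\ref{def:dcmd},'' and your induction on $\#\mathrm{GB}(\mathcal{D})$ is precisely that repeated application made rigorous. The bookkeeping matching Definitions~\ref{def:dcmd} and~\ref{def:sdcmd} is handled correctly.
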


\begin{dfn}\label{dfn:d-delta}
Given a submatrix $\Delta=M'[I,J]$ of $M'$, define a decorated complementary minor diagram $\mathcal{D}_{\Delta}$ as follows: first, order the elements of $J \sqcup C(\Delta)$ so that $J \sqcup C = \{c_1 \leq c_2 \leq \ldots \leq c_\ell\}$. Then, to create the left column of $\mathcal{D}_{\Delta}$, do the following for each $1 \leq i \leq \ell$, starting from the top and moving down the column:
\begin{itemize}
\item if $c_i \in C(\Delta) \setminus J$, place a black dot;
\item if $c_i \in J \setminus C(\Delta)$, place a white dot;
\item if $c_i \in J \cap C(\Delta)$ and $c_i \neq c_{i-1}$, place a black dot; and
\item if $c_i \in J \cap C(\Delta)$ and $c_i = c_{i-1}$, place a white dot.
\end{itemize}
Also order the elements of $I \cup A(\Delta)^-$ so that $I \cup A(\Delta)^- = \{r_1 \leq r_2 \leq \ldots \leq r_\ell\}$. Then, to create the right column of $\mathcal{D}_{\Delta}$, do the following for each $1 \leq i \leq \ell$, starting from the top and moving down the column:
\begin{itemize}
\item if $r_i \not\in A(\Delta) \cup A(\Delta)^-$, place a black dot; and
\item if $r_{i} \in A(\Delta)^-$ (and therefore $r_{i+1} \in A(\Delta)$), place a gray block.
\end{itemize}
\end{dfn}

\begin{ex}
\[
\tilde{\Delta} = 
\begin{blockarray}{ccccccccc}
& & \blackdot & \whitedot & \blackdot & \whitedot & \whitedot & \blackdot & \whitedot \\
& & 1 & 2 & 3 & 3 & 4 & 5 & 5 \\
    \begin{block}{cc[ccccccc]}
\topgraydot & 3 & b_0 & b_1 & b_2 & b_2 & b_3 & b_4 & b_4 \\
\bottomgraydot & 4 & a_0 & a_1 & a_2 & a_2 & a_3 & a_4 & a_4 \\
\blackdot & 6 &  0 & 0 & d_0 & d_0 & d_1 & d_2 & d_2 \\
\topgraydot & 7 & 0 & 0 & b_0 & b_0 & b_1 & b_2 & b_2 \\
\bottomgraydot & 8 & 0 & 0 & a_0 & a_0 & a_1 & a_2 & a_2 \\
\topgraydot & 11 & 0 & 0 & 0 & 0 & 0 & b_0 & b_0 \\
\bottomgraydot & 12 & 0 & 0 & 0 & 0 & 0 & a_0 & a_0 \\
    \end{block}
\end{blockarray}
\quad \rightsquigarrow \quad 
\mathcal{D}_{\Delta} = 
\begin{tikzpicture}[baseline,x=.75cm, y=.5cm]
\begin{scope}[shift={(0,4)}]
\draw[circle, fill=black](0, -1) circle[radius = 1mm] node {};
\draw[circle, fill=white](0, -2) circle[radius = 1mm] node {};
\draw[circle, fill=black](0, -3) circle[radius = 1mm] node {};
\draw[circle, fill=white](0, -4) circle[radius = 1mm] node {};
\draw[circle, fill=white](0, -5) circle[radius = 1mm] node {};
\draw[circle, fill=black](0, -6) circle[radius = 1mm] node {};
\draw[circle, fill=white](0, -7) circle[radius = 1mm] node {};

\draw[circle, fill=gray] (1, -1) circle[radius = 1mm] node (D1) {};
\draw[circle, fill=gray] (1, -2) circle[radius = 1mm] node (D2) {};
\draw[circle, fill=black](1, -3) circle[radius = 1mm] node {};
\draw[circle, fill=gray] (1, -4) circle[radius = 1mm] node (D3) {};
\draw[circle, fill=gray] (1, -5) circle[radius = 1mm] node (D4) {};
\draw[circle, fill=gray] (1, -6) circle[radius = 1mm] node (D5) {};
\draw[circle, fill=gray] (1, -7) circle[radius = 1mm] node (D6) {};
\node[draw, dotted, fit=(D1) (D2), inner sep=0mm] {};
\node[draw, dotted, fit=(D3) (D4), inner sep=0mm] {};
\node[draw, dotted, fit=(D5) (D6), inner sep=0mm] {};
\end{scope}
\end{tikzpicture}
\]
\end{ex}

\begin{lem}\label{lem:dcmd-eq-det}
For any submatrix $\Delta$ of $M'$, we have $\left| \Delta\right| = {b_0}^{-|A(\Delta)|} \mathcal{D}_{\Delta}\left(\tilde{\Delta}\right)$.
\end{lem}

\begin{proof}
By Definition~\ref{dfn:delta-s} and Lemmas~\ref{lem:multilin-expansion}~and~\ref{lem:multilin-expansion-is-tilde-cm}, we have
\begin{align*}
\left|\Delta\right| &= \sum_{S \subseteq A(\Delta)} (-c)^{|S|} \left|\Delta^S\right| \\
&= \sum_{S \subseteq A(\Delta)} (-1)^{|S|} a^{|S|} b^{-|S|} \left|\tilde{\Delta}[(I \setminus S) \cup S^-, \,\, J]\right| \\
&= b^{-|A(\Delta)|} \sum_{S \subseteq A(\Delta)} (-1)^{|S|} a^{|S|} b^{|A|-|S|} \left|\tilde{\Delta}[(I \setminus S) \cup S^-, \,\, J]\right| \\
&= b^{-|A(\Delta)|} \sum_{S \subseteq A(\Delta)} (-1)^{|S|}  \left|\tilde{\Delta}[S \cup (A(\Delta) \setminus S)^-, \,\, C(\Delta)]\right| \left|\tilde{\Delta}[(I \setminus S) \cup S^-, \,\, J]\right| \\
&= b^{-|A(\Delta)|} \sum_{S \subseteq A(\Delta)} (-1)^{|S|}  \mathcal{C}_{S \cup (A(\Delta) \setminus S)^-, \,\, C(\Delta)}\left(\tilde{\Delta}\right).
\end{align*}

Now, by Definition~\ref{dfn:d-delta}, we know that $A(\Delta)$ corresponds exactly to $\mathrm{GB}(\mathcal{D}_{\Delta})$. Thus each $S \subseteq A(\Delta)$ can be identified with a subset $S \subseteq \mathrm{GB}(\mathcal{D}_{\Delta})$. Given such a subset $S$, we claim that
    \begin{align}
    \mathcal{C}_{S \cup (A(\Delta) \setminus S)^-, \,\,\, C(\Delta)}\left(\tilde{\Delta}\right) = \mathcal{D}_{\Delta}^S\left(\tilde{\Delta}\right). \label{eq:comp-is-dcmd}
    \end{align}
To see that this is true, we check that the complementary minor diagrams for each side are the same.

For each side of Equation~\ref{eq:comp-is-dcmd}, the left column of the associated complementary minor diagram has a black dot for each member of $C(\Delta)$ and a white dot for each member of $J$; this is true regardless of the choice of $S$, since the left column does not depend on $S$ in either immanant.

Now, let us consider the right column of the complementary minor diagrams for the immanants in Equation~\ref{eq:comp-is-dcmd}. By definition, $\mathcal{D}_{\Delta}$ has a gray block for each $r \in A(\Delta)$; this gray block is associated to a pair $\{r-1, r\}$ of rows in $\tilde{\Delta}$. When we pass to $\mathcal{D}_{\Delta}^S$, if $r \in S$, the corresponding gray block is specialized to       
    \[\begin{tikzpicture}[baseline,x=.75cm, y=.5cm]
    \begin{scope}[shift={(0,.6)}]
    \draw[circle, fill=black] (0, 0) circle[radius = 1mm] node (D1) {};
    \draw[circle, fill=white] (0, -1) circle[radius = 1mm] node (D2) {};
    \node[draw, dotted, fit=(D1) (D2), inner sep=0mm] {};
    \end{scope}
    \end{tikzpicture}.\]
This corresponds to adding $r$ to the row set of the complementary minor immanant described by $\mathcal{D}_{\Delta}^S$, since by Definition~\ref{dfn:comp-minor-diagram}, white dots in the right column correspond to the row set. On the other hand, if $r \not\in S$, the corresponding gray block is specialized to
    \[\begin{tikzpicture}[baseline,x=.75cm, y=.5cm]
    \begin{scope}[shift={(0,.6)}]
    \draw[circle, fill=white] (0, 0) circle[radius = 1mm] node (D1) {};
    \draw[circle, fill=black] (0, -1) circle[radius = 1mm] node (D2) {};
    \node[draw, dotted, fit=(D1) (D2), inner sep=0mm] {};
    \end{scope}
    \end{tikzpicture}.\]
This corresponds to adding $r-1$ to the row set of the complementary minor immanant described by $\mathcal{D}_{\Delta}^S$. Thus $\mathcal{D}_{\Delta}^S$ has a white dot in the right column for each $r \in S$ and for each $r \in (A(\Delta) \setminus S)^-$, exactly as in $\mathcal{C}_{S \cup (A(\Delta) \setminus S)^-, \,\,\, C(\Delta)}$. For both sides of the equation, all of the remaining dots correspond to the set $(I \setminus S) \cup S^-$, and are all colored black by default. This completes the proof of Equation~\ref{eq:comp-is-dcmd}, and therefore of the theorem.
\end{proof}

With all of the necessary setup completed, we can now offer a proof of Proposition~\ref{lem:main}.

\begin{proof}[Proof of Proposition~\ref{lem:main}]
Let $\Delta$ be any submatrix of $M'$. By Lemma~\ref{lem:dcmd-eq-det}, it suffices to show $\mathcal{D}_{\Delta}\left(\tilde{\Delta}\right) \geq 0$. To start, apply Definition~\ref{dfn:sdcmd} to $\mathcal{D}_{\Delta}$ to obtain
    \[\mathcal{D}_{\Delta}\left(\tilde{\Delta}\right) = \sum_{S \subseteq \mathrm{GB}(\mathcal{D}_{\Delta})} (-1)^{|S|} \mathcal{D}_{\Delta}^S\left(\tilde{\Delta}\right).\]
Then apply Lemma~\ref{lem:tl-compat} to each $\mathcal{D}_{\Delta}^S$ to obtain
    \begin{align}
    \sum_{S \subseteq \mathrm{GB}(\mathcal{D}_{\Delta})} (-1)^{|S|} \mathcal{D}_{\Delta}^S\left(\tilde{\Delta}\right) = \sum_{S \subseteq \mathrm{GB}(\mathcal{D}_{\Delta})} (-1)^{|S|} \left(\sum_{T \in \Theta(S)} \mathrm{imm}_T\left(\tilde{\Delta}\right) \right),
    \label{eq:tl-main-expansion}
    \end{align}
where $\Theta(S)$ is the set of all noncrossing matchings $T \in \mathcal{B}_n$ which are compatible with the complementary minor diagram $\mathcal{D}_{\Delta}^S$; see Definition~\ref{dfn:tl-compat}. Let $\star$ denote the right hand side of Equation~\ref{eq:tl-main-expansion}.

Since $\tilde{\Delta}$ is a generalized submatrix of $M$, it is totally nonnegative. By Lemma~\ref{lem:tl-pos}, each of the $\mathrm{imm}_T\left(\tilde{\Delta}\right)$ is nonnegative. Thus, to finish the proof, we must show that any term with a negative coefficient in $\star$ either cancels with a positive term or evaluates to $0$ on $\tilde{\Delta}$. Recall that $S$ is the set of gray blocks in $\mathcal{D}_{\Delta}$ which are specialized to 
    \begin{tikzpicture}[baseline,x=.75cm, y=.5cm]
    \begin{scope}[shift={(0,.6)}]
    \draw[circle, fill=black] (0, 0) circle[radius = 1mm] node (D1) {};
    \draw[circle, fill=white] (0, -1) circle[radius = 1mm] node (D2) {};
    \node[draw, dotted, fit=(D1) (D2), inner sep=0mm] {};
    \end{scope}
    \end{tikzpicture}
in $\mathcal{D}_{\Delta}^S$. Since every negative term in $\star$ has $|S|$ odd, in particular we have $|S| \geq 1$. Accordingly, we proceed as follows: for each complementary minor diagram with
    \begin{tikzpicture}[baseline,x=.75cm, y=.5cm]
    \begin{scope}[shift={(0,.6)}]
    \draw[circle, fill=black] (0, 0) circle[radius = 1mm] node (D1) {};
    \draw[circle, fill=white] (0, -1) circle[radius = 1mm] node (D2) {};
    \node[draw, dotted, fit=(D1) (D2), inner sep=0mm] {};
    \end{scope}
    \end{tikzpicture}, 
and each noncrossing matching compatible with that diagram, we address the corresponding Temperley-Lieb immanant. We divide these terms into four cases, according to the destination of the strand connected to the white dot in the topmost instance of \begin{tikzpicture}[baseline,x=.75cm, y=.5cm]
    \begin{scope}[shift={(0,.6)}]
    \draw[circle, fill=black] (0, 0) circle[radius = 1mm] node (D1) {};
    \draw[circle, fill=white] (0, -1) circle[radius = 1mm] node (D2) {};
    \node[draw, dotted, fit=(D1) (D2), inner sep=0mm] {};
    \end{scope}
    \end{tikzpicture}.

\textbf{Case I:} There is $T \in \Theta(S)$ such that
    \[\begin{tikzpicture}[baseline, x=.75cm, y=.5cm]
    \begin{scope}[shift={(0,.6)}]
    \draw (0,0) to [bend right=90] (0,-1);
    \draw[circle, fill=black] (0, 0) circle[radius = 1mm] node (D1) {};
    \draw[circle, fill=white] (0, -1) circle[radius = 1mm] node (D2) {};
    \node[draw, dotted, fit=(D1) (D2), inner sep=0mm] {};
    \end{scope}
    \end{tikzpicture}\]
appears as a subdiagram of $T$. In this case, we claim the term cancels with a term of the opposite sign. This is because $T$ is also compatible with the coloring wherein we swap the colors in this block like so
    \[\begin{tikzpicture}[baseline, x=.75cm, y=.5cm]
    \begin{scope}[shift={(0,.6)}]
    \draw (0,0) to [bend right=90] (0,-1);
    \draw[circle, fill=white] (0, 0) circle[radius = 1mm] node (D1) {};
    \draw[circle, fill=black] (0, -1) circle[radius = 1mm] node (D2) {};
    \node[draw, dotted, fit=(D1) (D2), inner sep=0mm] {};
    \end{scope}
    \end{tikzpicture}\]
and leave the rest the same. Since the resulting coloring has one fewer gray block specialized to have black on top, it comes from some set $S'$ with cardinality one less than our original set. Thus $\mathrm{imm}_T\left(\tilde{\Delta}\right)$ appears at least twice in $\star$, with opposite signs. We can remove both terms from $\star$. Note that, since we are considering the topmost instance of this subdiagram, this color-swapping map is well-defined, and in fact a sign-reversing involution on the set of all terms in $\star$ containing at least one of the above two subdiagrams.

\textbf{Case II:} There is $T \in \Theta(S)$ such that
    \[\begin{tikzpicture}[baseline, x=.75cm, y=.5cm]
    \begin{scope}[shift={(0,.6)}]
    \draw (0,-1) to [bend right=60] (0,-3);
    \draw[circle, fill=black] (0, -3) circle[radius = 1mm] node (D4) {};
    \node (D3) at (0, -2) {$\vdots$};
    \draw[circle, fill=black] (0, 0) circle[radius = 1mm] node (D1) {};
    \draw[circle, fill=white] (0, -1) circle[radius = 1mm] node (D2) {};
    \node[draw, dotted, fit=(D1) (D2), inner sep=0mm] {};
    \end{scope}
    \end{tikzpicture}\]
appears as a subdiagram of $T$. In this case, we claim $\mathrm{imm}_T\left(\tilde{\Delta}\right) = 0.$ This is because $T$ is also compatible with the matching in which we swap the colors as follows:
    \[\begin{tikzpicture}[baseline, x=.75cm, y=.5cm]
    \begin{scope}[shift={(0,.6)}]
    \draw (0,-1) to [bend right=60] (0,-3);
    \draw[circle, fill=white] (0, -3) circle[radius = 1mm] node (D4) {};
    \node (D3) at (0, -2) {$\vdots$};
    \draw[circle, fill=black] (0, 0) circle[radius = 1mm] node (D1) {};
    \draw[circle, fill=black] (0, -1) circle[radius = 1mm] node (D2) {};
    \node[draw, dotted, fit=(D1) (D2), inner sep=0mm] {};
    \end{scope}
    \end{tikzpicture}.\]
Now, this coloring does not appear in $\star$, since we always specialize gray blocks to have one black and one white dot; here the block appears with two black dots. But since the total number of black and white dots has not changed, this coloring still corresponds to some valid complimentary minor diagram, which can still be evaluated on $\tilde{\Delta}$. Write the corresponding product of complementary minors as $|\tilde{\Delta}_1| \cdot |\tilde{\Delta}_2|$.

Recall that the white dots in the right column of a complementary minor diagram indicate which rows of $M$ are to be used in $\tilde{\Delta}_1$. By the reasoning in the proof of Lemma~\ref{lem:multilin-expansion-is-tilde-cm}, before the colors were swapped, $\tilde{\Delta}_1$ was upper triangular, with the entries on the main diagonal all either $a_0$ or $b_0$. In particular,
    \begin{tikzpicture}[baseline, x=.75cm, y=.5cm]
    \begin{scope}[shift={(0,.6)}]
    \draw[circle, fill=black] (0, 0) circle[radius = 1mm] node (D1) {};
    \draw[circle, fill=white] (0, -1) circle[radius = 1mm] node (D2) {};
    \node[draw, dotted, fit=(D1) (D2), inner sep=0mm] {};
    \end{scope}
    \end{tikzpicture}
corresponds to placing an $a_0$ on the main diagonal of $\tilde{\Delta}_1$. After swapping colors, the minor has the same column set, since we only swapped colors on the right side. But its row set has been altered by exchanging some row for a lower row. Recall that since $a_0$ is a special SW corner in $M$, all entries below it are $0$. Since we previously had an $a_0$ on the main diagonal and are now using a lower row, the color swap has placed a $0$ on the main diagonal. See the diagram below. Entries boxed in blue belong to $\tilde{\Delta}_1$, while entries boxed in red belong to $\tilde{\Delta}_2$.

    \[\begin{tikzpicture}[baseline]
    \matrix [matrix of math nodes,left delimiter=(,right delimiter=)] (H) at (0,0)
    {b_0 & \ldots & b_{i_1} & b_{i_2} & \ldots & b_{i_\ell} \\
    a_0 & \ldots & a_{i_1} & a_{i_2} & \ldots & a_{i_\ell} \\
    \vdots & & \vdots & \vdots & & \vdots \\
    0 & \ldots & \ast & \ast & \ldots & \ast \\ };
    
    \node[fit=(H-1-6)] (D1) {};
    \node[right=8mm of D1] (d1) {};
    \draw[fill=black] (d1) circle [radius=1mm];

    \node (d2) at (d1 |- H-2-6) {};
    \draw[fill=white] (d2) circle [radius=1mm];

    \node (d3) at (d1 |- H-3-6) {$\vdots$};

    \node (d4) at (d1 |- H-4-6) {};
    \draw[fill=black] (d4) circle [radius=1mm];

    \node[draw, dotted, fit=(d1) (d2), inner sep=1mm] {};

    \node[draw=blue, fit=(H-2-1), inner sep=0pt] {};
    \node[draw=red, fit=(H-1-4) (H-1-6), inner sep=0pt] {};
    \node[draw=red, fit=(H-4-4) (H-4-6), inner sep=0pt] {};
    \end{tikzpicture}
    \quad \rightsquigarrow \quad
    \begin{tikzpicture}[baseline]
    \matrix [matrix of math nodes,left delimiter=(,right delimiter=)] (H) at (0,0)
    {b_0 & \ldots & b_{i_1} & b_{i_2} & \ldots & b_{i_\ell} \\
    a_0 & \ldots & a_{i_1} & a_{i_2} & \ldots & a_{i_\ell} \\
    \vdots & & \vdots & \vdots & & \vdots \\
    0 & \ldots & \ast & \ast & \ldots & \ast \\ };
    
    \node[fit=(H-1-6)] (D1) {};
    \node[right=8mm of D1] (d1) {};
    \draw[fill=black] (d1) circle [radius=1mm];

    \node (d2) at (d1 |- H-2-6) {};
    \draw[fill=black] (d2) circle [radius=1mm];

    \node (d3) at (d1 |- H-3-6) {$\vdots$};

    \node (d4) at (d1 |- H-4-6) {};
    \draw[fill=white] (d4) circle [radius=1mm];

    \node[draw, dotted, fit=(d1) (d2), inner sep=1mm] {};

    \node[draw=blue, fit=(H-4-1), inner sep=0pt] {};
    \node[draw=red, fit=(H-1-4) (H-2-6), inner sep=0pt] {};
    \end{tikzpicture}\]
Thus $|\tilde{\Delta}_1| = 0$, and therefore $|\tilde{\Delta}_1| \cdot |\tilde{\Delta}_2| = 0$. By Lemma~\ref{lem:tl-zero}, any Temperley-Lieb immanant compatible with this coloring is also $0$, so $\mathrm{imm}_T\left(\tilde{\Delta}\right) = 0$.

\textbf{Case III:} There is $T \in \Theta(S)$ such that
    \[\begin{tikzpicture}[baseline, x=.75cm, y=.5cm]
    \begin{scope}[shift={(0,.6)}]
    \draw (0,-1) to (-1.5,-1);
    \draw[circle, fill=black] (0, 0) circle[radius = 1mm] node (D1) {};
    \draw[circle, fill=white] (0, -1) circle[radius = 1mm] node (D2) {};
    \draw[circle, fill=black] (-1.5, -1) circle[radius = 1mm] node {};
    \node[draw, dotted, fit=(D1) (D2), inner sep=0mm] {};
    \end{scope}
    \end{tikzpicture}\]
appears as a subdiagram of $T$. Now, if the black dot in the right column connects upwards to a white dot, we are in case II. So we can assume without loss of generality that it must connect to a white dot in the left column, so that
    \[\begin{tikzpicture}[baseline, x=.75cm, y=.5cm]
    \begin{scope}[shift={(0,.6)}]
    \draw (0,-1) to (-1.5,-1.25);
    \draw (0,0) to (-1.5,.25);
    \node at (-1.5,-.25) {$\vdots$};
    \draw[circle, fill=black] (0, 0) circle[radius = 1mm] node (D1) {};
    \draw[circle, fill=white] (-1.5, 0.25) circle[radius = 1mm] node {};
    \draw[circle, fill=white] (0, -1) circle[radius = 1mm] node (D2) {};
    \draw[circle, fill=black] (-1.5, -1.25) circle[radius = 1mm] node {};
    \node[draw, dotted, fit=(D1) (D2), inner sep=0mm] {};
    \end{scope}
    \end{tikzpicture}\]
appears as a subdiagram. In this case, we claim that $\mathrm{imm}_T\left(\tilde{\Delta}\right)=0$. This is because $T$ is also compatible with the coloring in which we swap the coloring like so
    \[\begin{tikzpicture}[baseline, x=.75cm, y=.5cm]
    \begin{scope}[shift={(0,.6)}]
    \draw (0,-1) to (-1.5,-1.25);
    \draw (0,0) to (-1.5,.25);
    \node at (-1.5,-.25) {$\vdots$};
    \draw[circle, fill=white] (0, 0) circle[radius = 1mm] node (D1) {};
    \draw[circle, fill=black] (-1.5, 0.25) circle[radius = 1mm] node {};
    \draw[circle, fill=black] (0, -1) circle[radius = 1mm] node (D2) {};
    \draw[circle, fill=white] (-1.5, -1.25) circle[radius = 1mm] node {};
    \node[draw, dotted, fit=(D1) (D2), inner sep=0mm] {};
    \end{scope}
    \end{tikzpicture}.\]
As in case II, this coloring does not correspond to any summand in $\star$, since we never change the colors in the left column when constructing $\star$. However, we may still evaluate this complementary minor diagram on $\tilde{\Delta}$ to obtain a complementary minor immanant $|\tilde{\Delta}_1| \cdot |\tilde{\Delta}_2|$.

As in Case II, before the color swap, $|\tilde{\Delta}_1|$ was an minor with $a_0$s and $b_0$s on the main diagonal. Swapping the colors of the dots within the block on the right simply corresponds to changing out an $a_0$ on the main diagonal for a $b_0$.

On the left side, black dots indicate the column set of $\tilde{\Delta}_1$. Thus the color swap corresponds to replacing some column in $\tilde{\Delta}_1$ for a column weakly to the left in $M$. The dots on the left indicate a multiset, so we may have swapped the colors for a pair of duplicate columns. By construction (see Definition~\ref{dfn:d-delta}), however, the second copy of a given column always come second, in white. Thus the original pair could not have been a pair of duplicated columns, so we have in fact moved one of the columns used by $\tilde{\Delta}_1$ strictly to the left. But all entries below and to the left of an $a_0$ and $b_0$ in $M$ are equal to $0$. Thus this new minor is still upper triangular, but now with zeroes on its main diagonal. See the diagram below.

    \[\begin{tikzpicture}[baseline]
    \matrix [matrix of math nodes,left delimiter=(,right delimiter=), nodes in empty cells] (H) at (0,0)
    {a_0 & & & & & & & & \\
    & b_0 & & & & & & & \\
    & & b_0 & & & & & & \\
    & & & a_0 & & & & & \\
    & & & & a_0 & & & & \\
    & & & & & a_0 & & & \\ 
    & & & & & & a_0 & & \\
    & & & & & & & b_0 & \\ 
    & & & & & & & & a_0 \\ };
    \node[draw=black, fit=(H-1-5) (H-9-5), inner sep=2pt] {};
    \end{tikzpicture}
    \quad \rightsquigarrow \quad
    \begin{tikzpicture}[baseline]
    \matrix [matrix of math nodes,left delimiter=(,right delimiter=), nodes in empty cells] (H) at (0,0)
    {a_0 & & & & & & & & \\
    & b_0 & & & & & & & \\
    & & 0 & b_0 & & & & & \\
    & & & 0 & a_0 & & & & \\
    & & & & 0 & & & & \\
    & & & & & a_0 & & & \\ 
    & & & & & & a_0 & & \\
    & & & \leftarrow & & & & b_0 & \\ 
    & & & & & & & & a_0 \\ };
    \node[draw=black, fit=(H-1-3) (H-9-3), inner sep=2pt] {};
    \node[draw, dashed, fit=(H-1-5) (H-9-5), inner sep=3pt] {};
    \end{tikzpicture}\]
It follows that $|\tilde{\Delta}_1| = 0$, and therefore $|\tilde{\Delta}_1| \cdot |\tilde{\Delta}_2| = 0$. Again, by Lemma~\ref{lem:tl-zero}, any Temperley-Lieb immanant compatible with this coloring is $0$. Thus $\mathrm{imm}_T\left(\tilde{\Delta}\right) = 0$.

\textbf{Case IV:} There is $T \in \Theta(S)$ such that
    \[\begin{tikzpicture}[baseline, x=.75cm, y=.5cm]
    \begin{scope}[shift={(0,.6)}]
    \draw (0,2) to [bend right=60] (0,-1);
    \draw[circle, fill=black] (0, 2) circle[radius = 1mm] node (D4) {};
    \node (D3) at (0, 1.3) {$\vdots$};
    \draw[circle, fill=black] (0, 0) circle[radius = 1mm] node (D1) {};
    \draw[circle, fill=white] (0, -1) circle[radius = 1mm] node (D2) {};
    \node[draw, dotted, fit=(D1) (D2), inner sep=0mm] {};
    \end{scope}
    \end{tikzpicture}\]
appears as a subdiagram of $T$. Observe that the topmost black dot in this subdiagram must come from a gray block, and in fact must be at the top of its gray block. If not, then there are strictly more black dots than white dots inside the arc. This is because each white dot is added to the diagram in a pair with a black dot, and there is already one black dot inside the arc. Since there would be more black dots inside the arc than white, one could not complete a noncrossing matching. We assumed, however, that the subdiagram involved the topmost instance of \begin{tikzpicture}[baseline, x=.75cm, y=.5cm]
    \begin{scope}[shift={(0,.6)}]
    \draw[circle, fill=black] (0, 0) circle[radius = 1mm] node (D1) {};
    \draw[circle, fill=white] (0, -1) circle[radius = 1mm] node (D2) {};
    \node[draw, dotted, fit=(D1) (D2), inner sep=0mm] {};
    \end{scope}
    \end{tikzpicture}, so we do not need to consider this case.
\end{proof}

\section{Interlacing Polynomials}\label{sec:interlacing}

We now discuss an application of Proposition~\ref{lem:main} to the theory of interlacing polynomials.

\begin{dfn}\label{dfn:interlacing}
Given a pair of polynomials
\begin{align*}
p_0(t) = a_0 + a_1t + a_2t^2 + \ldots &\text{ with real roots } \chi_1 > \chi_2 > \ldots, \\
p_1(t) = b_0 + b_1t + b_2t^2 + \ldots &\text{ with real roots } \psi_1 > \psi_2 > \ldots,
\end{align*}
we say $p_0$ (weakly) \textit{interlaces} $p_1$ if either $\deg(p_0)=\deg(p_1)$ and the roots satisfy
\[\chi_1 \geq \psi_1 \geq \chi_2 \geq \psi_2 \geq \ldots \geq \chi_n \geq \psi_n,\]
or $\deg(p_0) = \deg(p_1)+1$ and the roots satisfy
\[\chi_1 \geq \psi_1 \geq \chi_2 \geq \psi_2 \geq \ldots \geq \chi_n \geq \psi_n \geq \chi_{n+1}.\]
Note that this definition is assymetrical. The polynomial $p_0$ is assumed to have the largest root overall, and weakly larger degree. By convention, any real-rooted polynomial interlaces the $0$ polynomial.
\end{dfn}

\begin{dfn}\label{dfn:hurwitz}
The \textit{(infinite) Hurwitz matrix} $H(p_0,p_1)$ of $p_0$ and $p_1$ is defined as follows:
\begin{align*}
H\left(p_0, p_1\right)_{i,j} &:=
\begin{cases}
b_{j-1-\lceil \frac{i}{2} \rceil}, & i \text{ odd} \\
a_{j-1-\frac{i}{2}}, & i \text{ even} \\
\end{cases},
\end{align*}
so that
\begin{align*}
H\left(p_0, p_1\right) &= 
\begin{bmatrix}
b_0 & b_1 & b_2 & b_3 & b_4 & \ldots & b_n & 0 & 0 & \ldots \\
a_0 & a_1 & a_2 & a_3 & a_4 & \ldots & a_n & a_{n+1} & 0 & \ldots \\
0 & b_0 & b_1 & b_2 & b_3 & \ldots & b_{n-1} & b_n & 0 & \ldots \\
0 & a_0 & a_1 & a_2 & a_3 & \ldots & a_{n-1} & a_n & a_{n+1} & \ldots \\
0 & 0 & b_0 & b_1 & b_2 & \ldots & b_{n-2} & b_{n-1} & b_n & \ldots \\
0 & 0 & a_0 & a_1 & a_2 & \ldots & a_{n-2} & a_{n-1} & a_n & \ldots \\
0 & 0 & 0 & b_0 & b_1 & \ldots & b_{n-3} & b_{n-2} & b_{n-1} & \ldots \\
0 & 0 & 0 & a_0 & a_1 & \ldots & a_{n-3} & a_{n-2} & a_{n-1} & \ldots \\
\vdots & \vdots & \vdots & \vdots & \vdots & \vdots & \vdots & \vdots & \vdots & \ddots
\end{bmatrix}.
\end{align*}
Note that $H(p_0, p_1)$ is the unfolding of $\begin{bmatrix} p_1(t) \\ p_0(t) \end{bmatrix}$.
\end{dfn}

The following theorem characterizes the deep relationship between interlacing and the Hurwitz matrix:
\begin{thm}[\cite{holtz-tyaglov}, Thm 3.44]\label{thm:known-interlacing}
If $p_0$ and $p_1$ have nonnegative coefficients, then $p_0$ interlaces $p_1$ if and only if $H\left(p_0, p_1 \right)$ is totally nonnegative.
\end{thm}

Using our Proposition~\ref{lem:main}, we can provide an alternative proof of Theorem~\ref{thm:known-interlacing} in the backward direction; i.e. that the total nonnegativity of the Hurwitz matrix forces the polynomials to interlace. To do so, we require the following well-known lemma, which can be viewed as a version of the Routh-Hurwitz~Theorem~\cite{holtz}:
\begin{lem}[\cite{fisk}]\label{lem:routh-hurwitz}
Suppose $p_0$ and $p_1$ are polynomials with nonnegative coefficients. Then $p_0$ interlaces $p_1$ if and only if $p_1(0) \neq 0$ and $p_1(t)$ interlaces $\left(p_0(t)-\frac{p_0(0)}{p_1(0)}p_1(t)\right)/t$.
\end{lem}

\begin{proof}[Proof of Theorem~\ref{thm:known-interlacing}, backward direction]
Suppose $H(p_0, p_1)$ is totally nonnegative. We proceed to show that $p_0$ must interlace $p_1$ by induction on $\deg(p_0)+\deg(p_1)$. The base case occurs when $p_0$ and $p_1$ are both constants. For any pair of constants $a_0, b_0 \geq 0$, the matrix $H(a_0, b_0)$ is clearly totally nonnegative. Constant functions vacuously interlace, since they have no zeroes. Thus the result holds in this case.

Now assume $\deg(p_0)+\deg(p_1) > 0$. Note that $\deg(p_1) \leq \deg(p_0) \leq \deg(p_1)+1$, otherwise we can find a negative $2 \times 2$ minor in $H(p_0, p_1)$. We may also assume without loss of generality that $p_1(0) \neq 0$ and $p_0(0) \neq 0$, which we address in three cases:

\textbf{Case I:} if $p_1(0)=b_0=0$ and $p_0(0) = a_0 \neq 0$, then $H(p_0,p_1)$ has a negative $2 \times 2$ minor unless $p_1$ is identically $0$. Since the $0$ polynomial interlaces any other real-rooted polynomial by convention, we need not consider this case. 

\textbf{Case II:} if $p_1(0) = 0$ and $p_0(0)=0$, then both $p_0$ and $p_1$ are divisible by $t$. From there, it is easy to see that $H\left(p_0, p_1 \right)$ is totally nonnegative if and only if $H\left(p_0 /t , p_1/t\right)$ is totally nonnegative. We may iteratively apply this fact until $p_1(0) \neq 0$. Observe also that $p_0$ interlaces $p_1$ if and only if $tp_0$ interlaces $tp_1$; thus if we assume $p_1(0) \neq 0$, we can return to the general case by multiplying by a power of $t$.

\textbf{Case III:} if $p_0(0) = b_0 \neq 0$ and $p_0(0)=a_0=0$, we may remove the first row and column of $H(p_0, p_1)$ and obtain the the totally nonnegative submatrix $H(p_1,p_0/t)$. Note that, since $p_0$ has $0$ as a root, we know $p_0$ interlaces $p_1$ if and only if $p_1$ interlaces $p_0/t$. Thus we may substitute $p_0 \to p_1$ and $p_1 \to p_0/t$ to continue the proof with two polynomials with nonzero constant term.

Since $H(p_0, p_1)$ is totally nonnegative and $(2,1)$-periodic, we may apply Lemma~\ref{lem:SW-corner-existence} to conclude that it has a special SW corner. Since $p_1(0)\neq 0$ and $p_0(0) \neq 0$, that special SW corner must be in entry $(2,1)$. To eliminate this corner, we will subtract $c=\frac{p_0(0)}{p_1(0)}$ from the second row. We can then apply Proposition~\ref{lem:main} to conclude that the resulting matrix is TN. Observe that, by deleting the first row and column of the eliminated matrix, we obtain $\displaystyle H' = H\left(p_1, \,\,\,\left(p_0-cp_1\right)/t\right)$.

Now notice that $H'$ is a totally positive Hurwitz matrix for a pair of polynomials with total degree
    \[\deg(p_1) + \deg\left(\left(p_0-cp_1\right)/t\right) = \deg(p_1) + \deg(p_0)-1.\]
Thus, by induction, $p_1$ and $\left(p_0-cp_1\right)/t$ weakly interlace. By Lemma~\ref{lem:routh-hurwitz}, it follows that $p_0$ and $p_1$ weakly interlace.
\end{proof}

Theorem~\ref{thm:known-interlacing} can be seen as a necessary and sufficient condition for a $(2,1)$-perdiodic matrix to be TN. For $(1,2)$-periodic matrices, one has the following version of Theorem~\ref{thm:known-interlacing}:

\setcounter{MaxMatrixCols}{30}

\begin{cor}\label{cor:1by2-loops}
Suppose the polynomials
\[p_0(t) = a_0 + a_1t + a_2t^2 + \ldots \text{ and } p_1(t) = b_0 + b_1t + b_2t^2 + \ldots\]
have nonnegative coefficients. Then the unfolding of $\begin{bmatrix} p_1 & p_0 \end{bmatrix}$, i.e. the matrix
\[M = \left[\begin{NiceMatrix}
b_0 & a_0 & b_1 & a_1 & b_2 & a_2 & \ldots & b_n & a_n & b_{n+1} & \ldots \\
0 & 0 & b_0 & a_0 & b_1 & a_1 & \ldots & b_{n-1} & a_{n-1} & b_n & \ldots \\
0 & 0 & 0 & 0 & b_0 & a_0 & \ldots & b_{n-2} & a_{n-2} & b_{n-1} & \ldots \\
\vdots & \vdots & \vdots & \vdots & \vdots & \vdots & \vdots & \vdots & \vdots & \vdots & \ddots \\
\end{NiceMatrix}\right],\]
is totally nonnegative if and only if $t^{\deg(p_1)}p_0(1/t)$ interlaces $t^{\deg(p_1)}p_1(1/t)$.
\end{cor}

\begin{proof}
Let $M$ be as in the statement of the corollary, then let $\widetilde{p_0}(t) = t^{\deg(p_1)}p_0(1/t)$ and let $\tilde{p}_1(t) = t^{\deg(p_1)}p_1(1/t)$. Note that we are mutiplying by $t^{\deg(p_1)}$ in both expressions. The polynomial $\tilde{p}_1(t)$ is precisely $p_1(t)$, but with with the coefficients in reverse order, i.e. with the coefficient of $t^i$ swapped with the coefficient of $t^{\deg(p_1)-i}$. The expression $\widetilde{p_0}(t)$ also has its coefficients in reverse order compared to $p_0(t)$, but possibly with a degree shift.

We claim that $M^\top$ has $H\left(\tilde{p}_0, \tilde{p}_1\right)$ as a submatrix and vice versa. Since a matrix is TN if and only if its transpose is TN, the result will follow. To see that the claim is true, first observe that $\deg(p_0) \leq \deg(p_1) \leq \deg(p_0)+1$; otherwise we can find a negative $2 \times 2$ minor in $M$.

In the first case, when $\deg(p_0) = \deg(p_1)=n$, consider the $(2,1)$-periodic matrix obtained by deleting the first $2n$ columns of $M$, then transposing:
\[\begin{bmatrix}
b_n & b_{n-1} & b_{n-2} & \ldots & b_2 & b_1 & b_0 & 0 & \ldots \\
a_n & a_{n-1} & a_{n-2} & \ldots & a_2 & a_1 & a_0 & 0 & \ldots \\
0 & b_n & b_{n-1} & \ldots & b_3 & b_2 & b_1 & b_0 & \ldots \\
0 & a_n & a_{n-1} & \ldots & a_3 & a_2 & a_1 & a_0 & \ldots \\
\vdots & \vdots & \vdots & \vdots & \vdots & \vdots & \vdots & \vdots & \ddots
\end{bmatrix}.\]
By inspection, the numbers in the first row are the coefficients of $\tilde{p}_1$, and the numbers in the second row are the coefficients of $\tilde{p}_0$. Thus this is $H\left(\tilde{p}_0, \tilde{p}_1\right)$. By deleting the first $n$ columns of $H\left(\tilde{p}_0, \tilde{p}_1\right)$, we recover $M^\top$.

In the second case, when $\deg(p_1) = \deg(p_0)+1 = n+1$, we instead consider the $(2,1)$-periodic matrix obtained by deleting the first $2n+2$ columns of $M$, then transposing:
\[\begin{bmatrix}
b_{n+1} & b_n & b_{n-1} & \ldots & b_2 & b_1 & b_0 & 0 & \ldots \\
0 & a_n & a_{n-1} & \ldots & a_2 & a_1 & a_0 & 0 & \ldots \\
0 & b_{n+1} & b_n & \ldots & b_3 & b_2 & b_1 & b_0 & \ldots \\
0 & 0 & a_n & \ldots & a_3 & a_2 & a_1 & a_0 & \ldots \\
\vdots & \vdots & \vdots & \vdots & \vdots & \vdots & \vdots & \vdots & \ddots
\end{bmatrix}.\]
Again, the numbers in the first row are the coefficients of $\tilde{p}_1$. The numbers in the second row are also still the coefficients of $\tilde{p}_0$; note that they have been shifted up by $1$ because $\deg(p_1) = \deg(p_0)+1$. As before, we conclude this is $H\left(\tilde{p}_0, \tilde{p}_1\right)$. By deleting the first $n+1$ columns of $H\left(\tilde{p}_0, \tilde{p}_1\right)$, we recover $M^\top$.
\end{proof}

By applying either Theorem~\ref{thm:known-interlacing} or Corollary~\ref{cor:1by2-loops} to every $2 \times 1$ or $1 \times 2$ submatrix of a general loop, we obtain the following corollary:

\begin{cor}\label{cor:same-row-col}
Suppose that
\[M = \begin{bmatrix} p_{1,1}(t) & p_{1,2}(t) & \ldots & p_{1,m}(t) \\
p_{2,1}(t) & p_{2,2}(t) & \ldots & p_{2,m}(t) \\
\vdots & \vdots & \ddots & \vdots \\
p_{n,1}(t) & p_{n,2}(t) & \ldots & p_{n,m}(t) \end{bmatrix}\] 
has a TN unfolding. Then for all $i < i'$ and $j < j'$, 
{\setstretch{1.5}
\begin{itemize}
\item the polynomial $p_{i',j}(t)$ interlaces $p_{i,j}(t)$ and 
\item the polynomial $t^{\deg(p_{i,j})}p_{i,j'}(1/t)$ interlaces $t^{\deg(p_{i,j})}p_{i,j}(1/t)$.
\end{itemize}
}
\end{cor}

The converse of Corollary~\ref{cor:same-row-col} is not true. Consider the matrix of polynomials below and its associated unfolding:
\begin{align*}
{\renewcommand*{\arraystretch}{2.3}
\left[\begin{array}{cc}
(t+3)(t+7) & (t+4)(t+10) \\
(t+2)(t+5) & (t+3)(t+6) \\
\end{array} \right]}
& \underset{\text{Unfold}}{\leadsto}
\left[\begin{array}{cc|cc|cc|cc|c}
21 & 40 & 10 & 14 & 1 & 1 & 0 & 0 & \ldots \\
10 & 18 & 7 & 9 & 1 & 1 & 0 & 0 & \ldots \\
\hline
0 & 0 & 21 & 40 & 10 & 14 & 1 & 1 & \ldots \\
0 & 0 & 10 & 18 & 7 & 9 & 1 & 1 & \ldots \\
\hline
\vdots & \vdots & \vdots & \vdots & \vdots & \vdots & \vdots & \vdots & \ddots
\end{array} \right]
\end{align*}
Although this matrix satisfies the conclusions of Corollary~\ref{cor:same-row-col}, it is not TN, since it has the following minor:
\[\left|\begin{matrix} 21 & 40 \\ 10 & 18 \end{matrix}\right| = -22.\]
Thus prompting the following open question:
\begin{qstn}
Suppose $M$ is as in Corollary~\ref{cor:same-row-col}. Can we find necessary and sufficient conditions on the polynomials $p_{i,j}$?
\end{qstn}



\section*{Acknowledgements}

I would like to thank my advisor, Pasha Pylyavskyy, for his guidance and support while working on this project; professors Christine Berkesch, Gregg Musiker, and Victor Reiner for reading early drafts of this work; fellow graduate students Joe McDonough, Anastasia Nathanson, Lilly Webster, and Sylvester Zhang for hearing out my ideas and helping me test examples. I received partial support from NSF grant DMS-1949896.
\printbibliography

\end{document}